\documentclass[a4paper,12pt,oneside]{article}
\usepackage{amsmath, amssymb, graphicx, amsthm, mathrsfs}
\usepackage{mathptmx}
\usepackage{mathtools} 
\usepackage{geometry}
\usepackage{enumitem}  
\usepackage{hyperref}
\usepackage{tikz}
\usepackage{pgfplots}
\pgfplotsset{compat=1.18}
\newtheorem{theorem}{Theorem}[section]
\newtheorem{definition}[theorem]{Definition}
\newtheorem{lemma}[theorem]{Lemma}
\newtheorem{proposition}[theorem]{Proposition}
\newtheorem{corollary}[theorem]{Corollary}

\newtheorem{remark}[theorem]{Remark}
\newtheorem{example}[theorem]{Example}
\numberwithin{equation}{section}

\title{Perimetric Contractions and Their Iterates in Complete $b$-Metric Spaces}
\author{
 Mujahid Abbas\textsuperscript{1},  Alemayehu G. Negashs\textsuperscript{2,*}, Meaza F. Bogale \textsuperscript{3}\\
\small \textsuperscript{1}Department of Mechanical Engineering Sciences, Faculty of Engineering and the Built Environment,\\ \small Doornfontein Campus, University of Johannesburg, South Africa, \\ \small and Department of Medical Research, China Medical University,\\ \small Taichung 404, Taiwan. \\
\small \textsuperscript{1}Email: \texttt{ mujahid@uj.ac.za} \\
\small \textsuperscript{2,3}Department of Mathematics, Hampton University, USA\\
$^*$\small  \textit{Corresponding author} \\
\small \textsuperscript{\dag}Email: \texttt{alemayehu.negash@hamptonu.edu}\\
\small \textsuperscript{\ddag}Email: \texttt{meaza.bogale@hamptonu.edu}
}
\date{}

\begin{document}

\maketitle

\begin{abstract}
In this paper, the structural and operator-theoretic properties of contracting perimeters of triangles mapping (CPTM) within the generalized topological framework of complete $b$-metric spaces with coefficient $s \geq 1$, is systematically investigated. Extending recent foundational advancements from classical metric spaces, we explore the architectural interplay between multi-point perimetric constraints and path-wise orbital stability under two distinct structural framework. First, assuming the minimal exclusion of periodic orbits of prime period two, we prove that the higher-order iterates $f^{n}$ of an CPTM behave as graphic contractions for all indices satisfying the condition $sq^{n} < 1$. This classifies the operator as a weakly Picard operator and yields a unified existence and cardinality theorem establishing that the fixed-point set satisfies $1 \leq |\mathrm{Fix}(f)| \leq 2$.\\ Second, in the alternative configuration where the operator does possess a periodic orbit of prime period two, we resolve a significant structural gap under the parameter condition $sq^{2} < 1$. We demonstrate that the higher even iterates $f^{2n}$ collapse into continuous graphic contractions, proving that the mapping possesses exactly two periodic points which form a single, isolated 2-cycle. Throughout our proofs, we rigorously navigate the analytical challenges arising from the potential simultaneous non-continuity of the $b$-metric function by relying strictly on sequential tracking inequalities. Finally, we present concrete analytical examples, including a shift map on a discrete metric space, to show that the class of CPTM is strictly larger than the class of graphic contractions, thereby demonstrating the sharpness and optimality of the obtained parameter conditions.
\end{abstract}
\textbf{Keywords:} $b$-metric space; mapping contracting perimeters of triangles; graphic contraction; weakly Picard operator; periodic point.

\textbf{2010 Mathematics Subject Classification:}  Primary 47H05; Secondary 47J25.
\maketitle
\section{Introduction}
Fixed-point theory stands as a cornerstone of modern nonlinear analysis providing an appropriate mathematical tools to establish the existence, uniqueness, and approximation of solutions to differential, integral, and functional equations. The classical paradigm, anchored by the foundational Banach Contraction Principle relies on a complete metric space $(X,d)$ and a self-mapping $f:X\to X$ satisfying a pairwise contractive condition:
\begin{equation}
d(f(x),f(y))\leq \alpha d(x,y)\quad \forall x,y\in X,
\label{eq:banach}
\end{equation}
where $\alpha \in [0,1)$. Over the past several decades, research in this domain has expanded along two main, interconnected trajectories: the structural relaxation of the underlying geometric and topological properties of the space, and the formulation of multi-point contractive conditions that capture complex multi-variable geometric invariants.

A highly prominent structural generalization of classical metric spaces was introduced independently by Bakhtin \cite{bakhtin1989} and Czerwik \cite{czerwik1993} through the concept of a $b$-metric space. By adjusting the classical triangle inequality with a topological scaling parameter $s \geq 1$ such that
\begin{equation}
d(x,z)\leq s[d(x,y) + d(y,z)]\quad \forall x,y,z\in X,
\label{eq:bmetric_tri}
\end{equation}
 successfully encompasses a wide range of functionally significant spaces. Notable examples include the Lebesgue spaces $L^{p}(0,1)$ and the sequence spaces $\ell^{p}$ for $0< p< 1$, both of which fail the standard triangle inequality but naturally satisfy the $b$-metric axioms. A comprehensive overview of these early architectural developments and related topological concepts can be found in the definitive survey by Berinde and Păcurar \cite{berinde2024}.

Despite their broad applicability, $b$-metric spaces present formidable analytical hurdles. Most notably, the distance function $d(\cdot,\cdot)$ is generally not continuous in both variables simultaneously when $s > 1$ and closed balls are not necessarily closed sets. This loss of simultaneous continuity complicates limiting-passing across perimetric inequalities and requires rigorous bounding mechanisms, such as the sequence tracking techniques pioneered by Miculescu and Mihail \cite{miculescu2017} and Suzuki \cite{suzuki2017}.

Parallel to these spatial generalizations, a major conceptual shift occurred in how contraction conditions are framed. Standard contractive mappings evaluate distance transitions strictly across pairs of points. To capture higher-order geometric relations, several authors proposed multi-point analogues. Recently, Petrov \cite{petrov2023a} introduced an innovative geometric framework based on the perimeters of triangles. A mapping $f:X\to X$ is classified as a contracting perimeters of triangles mapping (CPTM) if it scales down the total pairwise distance, the perimeter of any triangle formed by three mutually distinct vertices:
\begin{equation}
P(f(x),f(y),f(z))\leq qP(x,y,z),\quad q\in [0,1),
\label{eq:mcpT_intro}
\end{equation}
where $P(x,y,z)\coloneqq d(x,y) + d(y,z) + d(z,x)$. Petrov demonstrated that while the Banach contraction principle follows as a natural corollary when $s = 1$, the multi-point perimetric condition is fundamentally independent. Crucially, an CPTM can accommodate non-unique fixed points (up to two) and does not strictly require the mapping to be pairwise contractive anywhere. However, to guarantee fixed-point existence, one must impose a structural constraint: the exclusion of periodic points of prime period two (2-cycles), which would otherwise allow the system to alternate endlessly without collapsing to a steady state.

Following this development, Cvetković \cite{cvetkovic2026} investigated the behavior of higher-order iterations of CPTM within classical metric spaces. A central finding was that while an CPTM itself might not contract individual trajectories, its sufficiently high iterates $f^{n}$ inevitably behave as graphic (or orbital) contractions—a class of operators extensively formalized by Rus \cite{rus1993,rus2001,rus2003,rus2008} and Petruşel and Petruşel \cite{petrusel2023}. Graphic contractions satisfy:
\begin{equation}
d(f(x),f^{2}(x))\leq \alpha d(x,f(x))\quad \forall x\in X,
\label{eq:graphic_intro}
\end{equation}
ensuring that successive approximations along an operational path form a Cauchy sequence. Cvetković proved that for any CPTM lacking a 2-cycle, all iterates except finitely many are graphic contractions, establishing a profound bridge between multi-point geometric constraints and path-wise orbital stability within the theory of weakly Picard operators \cite{rus2003,petrusel2019}.

The convergence and stability of graphic contractions in generalized environments are governed by precise parametric guardrails. In a $b$-metric space, as shown by Petruşel and Petruşel \cite{petrusel2023}, a graphic contraction with parameter $q$ yields a stable, convergent Picard iteration if it obeys the tracking constraint $sq< 1$. When transitioning this theory to the perimetric framework within $b$-metric spaces, the interplay between the geometric compounding of the spatial parameter $s\geq 1$ and the multi-point contractive parameter $q\in [0,1)$ poses severe structural complications.

Motivated by these challenges, the purpose of this paper is to generalize, unify and extend the study of perimetric contractions and their iterates to the framework of complete $b$-metric spaces. We systematically investigate the architectural relationship between CPTM and graphic contractions under the presence or absence of periodic orbits, establishing precise threshold parameters. The contributions of this work are organized as follows:
\begin{enumerate}
\item We establish that under the minimal assumption of excluding prime period-two orbits, the iterates of an CPTM in a $b$-metric space form a graphic contraction for any iterate index $n$ satisfying the critical threshold $sq^n < 1$.
\item We prove a unified fixed-point existence and cardinality theorem for CPTM in complete $b$-metric spaces, demonstrating that the fixed-point set satisfies $1 \leq |\operatorname{Fix}(f)| \leq 2$.
\item We address the alternative scenario where the operator does possess a periodic orbit of prime period two, providing explicit structural criteria (under the parameter guardrail $sq^2 < 1$) where the even iterates $f^{2n}$ collapse into graphic contractions, leading to exactly two periodic points.
\item We present concrete analytical examples and counterexamples including examining the shift map on discrete metric configurations to prove that the class of CPTM is strictly larger than graphic contractions, and that our parameter boundaries are optimal.
\end{enumerate}
By synthesizing multi-point perimetric geometry with the operator-theoretic mechanics of weakly Picard operators, this manuscript unifies and completes the structural landscape of orbital stability in generalized metric topologies.

\section{Preliminaries}
In this section, we present some basic topological and operator-theoretic concepts needed in the sequel. The set $\mathbb{N} \coloneqq \{0, 1, 2, \ldots \}$ denotes the set of all natural numbers, and set $\mathbb{N}^* \coloneqq \mathbb{N} \setminus \{0\}$. The set $\mathbb{R}_+$ denotes the set of non-negative real numbers.

\subsection{Topological Structure and Challenges in $b$-Metric Spaces}
We begin by recalling the definition of a $b$-metric space, which generalizes the classical concept of a metric space by relaxing the triangle inequality using a topological multiplier $s \geq 1$.

\begin{definition}
\label{def:bmetric}
(Bakhtin \cite{bakhtin1989}, Czerwik \cite{czerwik1993}) Let $X$ be a nonempty set and let $s \geq 1$  a given real number. A functional $d: X \times X \to \mathbb{R}_+$ is said to be a $b$-metric with constant $s$ if for all $x, y, z \in X$, the following conditions are satisfied:
\begin{align}
d(x,y) &= 0 \iff x = y; \label{eq:bmetric_zero}\\
d(x,y) &= d(y,x); \label{eq:bmetric_sym}\\
d(x,z) &\leq s[d(x,y) + d(y,z)] \qquad (b\text{-triangle inequality}). \label{eq:bmetric_tri_def}
\end{align}
The pair $(X,d)$ is called a $b$-metric space with coefficient $s$.
\end{definition}

Though the notion of convergence, Cauchy sequences, and completeness are defined analogously to the standard metric spaces but the topology of $b$-metric spaces presents some significant analytical subtleties. The most important property is the continuity of distance function which induces a topology on the underlying space. Indeed, the $b$-metric distance function $d(\cdot,\cdot)$ is not generally continuous in both variables simultaneously when $s > 1$. To handle limit transitions in dealing with perimetric inequalities without imposing the strong assumption of continuity, we require the following fundamental important lemma.

\begin{lemma}
\label{lem:tracking}
(Aghajani et al. \cite{aghajani2014}, Woldegiorgis et al. \cite{woldegiorgis2025}) Let $(X,d)$ be a $b$-metric space with coefficient $s \geq 1$. If $\{x_n\}$ and $\{y_n\}$ are sequences in $X$ such that $\lim_{n \to \infty} x_n = x$ and $\lim_{n \to \infty} y_n = y$, then:
\begin{equation}
\frac{1}{s^2} d(x,y)\leq \liminf_{n\to \infty}d(x_n,y_n)\leq \limsup_{n\to \infty}d(x_n,y_n)\leq s^2 d(x,y).
\label{eq:tracking}
\end{equation}
In particular, if $y_n = y$ is a constant sequence, the above inequality reduces to:
\begin{equation}
\frac{1}{s} d(x,y)\leq \liminf_{n\to \infty}d(x_n,y)\leq \limsup_{n\to \infty}d(x_n,y)\leq s d(x,y).
\label{eq:tracking_const}
\end{equation}
\end{lemma}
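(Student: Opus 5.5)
The whole argument rests on applying the $b$-triangle inequality \eqref{eq:bmetric_tri_def} twice and then passing to $\liminf$ and $\limsup$, using only the definition of convergence: $x_n\to x$ means $d(x_n,x)\to 0$, and likewise for $y_n$. No continuity of $d$ is needed, only these vanishing distances together with symmetry \eqref{eq:bmetric_sym}.

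\emph{Upper bound.} For each $n$ we apply \eqref{eq:bmetric_tri_def} through the intermediate point $x_n$, and then again through $y_n$:
\begin{equation*}
d(x,y)\leq s\,d(x,x_n)+s\,d(x_n,y)\leq s\,d(x,x_n)+s^2 d(x_n,y_n)+s^2 d(y_n,y).
\end{equation*}
Taking $\liminf_{n\to\infty}$ kills the terms $d(x,x_n)$ and $d(y_n,y)$, which gives $d(x,y)\leq s^2\liminf_{n\to\infty} d(x_n,y_n)$. This is exactly the left inequality of \eqref{eq:tracking}.

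\emph{Lower bound.} Symmetrically, we route the estimate of $d(x_n,y_n)$ through $x$ and then $y$:
\begin{equation*}
d(x_n,y_n)\leq s\,d(x_n,x)+s^2 d(x,y)+s^2 d(y,y_n).
\end{equation*}
Taking $\limsup_{n\to\infty}$ yields $\limsup_{n\to\infty} d(x_n,y_n)\leq s^2 d(x,y)$. The middle inequality $\liminf\leq\limsup$ holds trivially.

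\emph{Constant case.} When $y_n=y$ for all $n$, only one application of the triangle inequality is needed in each direction:
\begin{equation*}
d(x,y)\leq s\,d(x,x_n)+s\,d(x_n,y), \qquad d(x_n,y)\leq s\,d(x_n,x)+s\,d(x,y).
\end{equation*}
Passing to $\liminf$ in the first and $\limsup$ in the second gives \eqref{eq:tracking_const} with constant $s$ instead of $s^2$.

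There is no real obstacle here. The one point to watch is the order of the decompositions. The distance that survives the limit should carry the factor $s^2$ (or $s$ in the constant case). The vanishing terms may carry any constant factor, since they tend to zero anyway.
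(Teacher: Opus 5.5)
Your proof is correct. The paper gives no proof of this lemma and only cites Aghajani et al.\ and Woldegiorgis et al.; your argument is the standard one from those sources: two applications of the $b$-triangle inequality, then passing to $\liminf$ and $\limsup$ using $d(x_n,x)\to 0$ and $d(y_n,y)\to 0$. Your paragraph labels are swapped, which is harmless: the paragraph titled ``Upper bound'' proves the left inequality $\frac{1}{s^2}d(x,y)\le\liminf d(x_n,y_n)$, and vice versa. You were also right to treat the constant case separately. The factor $s$ there does not follow by specializing the $s^2$ conclusion; it needs your direct single application of the triangle inequality.
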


\begin{remark}
Lemma \ref{lem:tracking} demonstrates why traditional metric fixed-point proofs fail in $b$-metric spaces when passing directly to limits across expressions involving mutually distinct points. If $x_n \to x$, and $y_n \to y,$ we cannot simply write $\lim d(x_n, y_n) = d(x,y)$ without the careful bounding provided above.
\end{remark}

\subsection{Weakly Picard Operators and Graphic Contractions}
To connect multi-point perimetric conditions to orbital stability, we present a suitable framework to study abstract fixed-point theory. Let us start with the theory of weakly Picard operators established by (Rus \cite{rus1993}).

\begin{definition}
\label{def:wpo}
(Rus \cite{rus1993}) Let $(X,d)$ be a $b$-metric space. A mapping $f:X\to X$ is said to be a \emph{weakly Picard operator} (WPO) if the sequence of successive approximations $\{f^{n}(x)\}$ converges for all $x\in X$ and its limit is a fixed point of $f$.
\end{definition}

A highly effective pathway for establishing that an operator belongs to the class of WPOs is proving that it, or a sufficiently high iterate, contracts along its operational paths. This behavior is well captured by the notion of graphic contractions.

\begin{definition}
\label{def:graphic}
(Petruşel \& Petruşel \cite{petrusel2023}) Let $(X,d)$ be a $b$-metric space and $f:X\to X$ a mapping. Let $G_{f}\coloneqq \{(x,f(x)):x\in X\}$ denote the graph of $f$. The mapping $f$ is called a \emph{graphic contraction} (or orbital contraction) if there exists a constant $q\in [0,1)$ such that:
\begin{equation}
d(f^{2}(x),f(x))\leq qd(f(x),x)\quad \forall x\in X.
\label{eq:graphic_def}
\end{equation}
\end{definition}

The fundamental convergence and uniqueness structure of graphic contractions in complete generalized spaces is governed by the following benchmark result.

\begin{theorem}
\label{thm:graphic_basic}
(Petruşel \& Petruşel \cite{petrusel2023}) Let $(X,d)$ be a complete $b$-metric space with coefficient $s\geq 1$ and $f:X\to X$ a graphic contraction with parameter $q\in [0,1)$ satisfying $sq< 1$. If $f$ possesses a closed graph $G_f$, then:
\begin{enumerate}
\item $\operatorname{Fix}(f)\neq \emptyset$;
\item For any $x_0\in X$, the sequence of iterates $x_n = f^n (x_0)$ converges to a fixed point $x^*\in \operatorname{Fix}(f)$.
\end{enumerate}
\end{theorem}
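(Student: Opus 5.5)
The plan is to fix $x_0\in X$, set $x_n=f^n(x_0)$ and $d_n\coloneqq d(x_n,x_{n+1})$, and work only with the graphic contraction inequality \eqref{eq:graphic_def}. Applied at the point $x_{n-1}$, it gives $d_n\le q\,d_{n-1}$, and by induction
\[
d_n\le q^n d_0 \quad \text{for all } n\in\mathbb{N}.
\]
If $d_0=0$, then $x_0$ is already a fixed point and the orbit is constant, so both conclusions hold trivially. We therefore assume $d_0>0$.

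The main step is to prove that $\{x_n\}$ is Cauchy. Here lies the obstacle: iterating the $b$-triangle inequality naively over $p$ steps produces factors $s,s^2,\dots,s^p$. The hypothesis $sq<1$ is exactly what makes these factors summable. For $m<n$ we split off one step at a time:
\[
d(x_m,x_n)\le s d_m + s^2 d_{m+1}+\cdots + s^{n-m-1}d_{n-2}+s^{n-m-1}d_{n-1}.
\]
Each term is bounded by $s^{k}d_{m+k-1}\le s^k q^{m+k-1}d_0$, so
\[
d(x_m,x_n)\le q^{m}d_0\sum_{k\ge 1}s^k q^{k-1}= \frac{s\,q^m d_0}{1-sq}.
\]
The series converges because $sq<1$, and the right-hand side tends to $0$ as $m\to\infty$, uniformly in $n>m$. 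Hence $\{x_n\}$ is Cauchy. By completeness, $x_n\to x^*$ for some $x^*\in X$.

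Next we identify the limit using the closed graph, which avoids any continuity of $d$. The pairs $(x_n,x_{n+1})=(x_n,f(x_n))$ lie in $G_f$. They converge to $(x^*,x^*)$ in $X\times X$, since both coordinates converge to $x^*$. Closedness of $G_f$ then gives $(x^*,x^*)\in G_f$, that is, $f(x^*)=x^*$. If one prefers to make convergence in the product explicit, the limits of the coordinate sequences are unique in a $b$-metric space. This is a standard fact, and it also follows from Lemma~\ref{lem:tracking}: if $x_n\to a$ and $x_n\to b$, then $d(a,b)\le s[d(a,x_n)+d(x_n,b)]\to0$.

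Since $x_0$ was arbitrary, this proves part 2, and part 1 follows because $X\neq\emptyset$. The only delicate point is the Cauchy estimate. I expect everything else to be routine once the telescoping bound with the weights $s^k$ is written carefully, so that the last term carries the same power of $s$ as the one before it.
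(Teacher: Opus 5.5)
Your argument is correct and is the standard proof behind the cited result. The paper gives no proof of Theorem~\ref{thm:graphic_basic} itself; it only records the Cauchy estimate \eqref{eq:cauchy_estimate}, which comes from the same telescoping use of the $b$-triangle inequality with weights $s^k$, and your bound $\frac{s\,q^m}{1-sq}\,d(x_0,x_1)$ is slightly sharper than the paper's $\frac{s(sq)^m}{1-sq}\,d(x_0,x_1)$. Your closed-graph step is also sound, since $d(x_n,x^*)\to 0$ gives convergence in the induced topology and closed sets are sequentially closed.
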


\begin{remark}
The standard Cauchy convergence estimate for graphic contractions in a $b$-metric space is given by
\begin{equation}
d(x_{n},x_{n + p})\leq \frac{s(sq)^{n}}{1 - sq} d(x_{0},x_{1}).
\label{eq:cauchy_estimate}
\end{equation}
This expression is stable if and only if the underlying parameters satisfy $sq< 1$. If $sq\geq 1$, the sequence may diverge or fail to be Cauchy due to the geometric compounding of the multiplier $s$.
\end{remark}

\subsection{Contracting Perimeters of Triangles Mappings (CPTM)}
We now formulate a single, unified definition for mappings that contract the perimeters of triangles in the framework of generalized metric spaces. This parameters-invariant specification naturally embeds standard metric spaces when setting $s = 1$.

For any three points $x,y,z\in X$, the perimeter of the triangle formed by these vertices is defined as the sum of their total pairwise distances:
\begin{equation}
P(x,y,z)\coloneqq d(x,y) + d(y,z) + d(z,x).
\label{eq:perimeter_def}
\end{equation}

\begin{definition}
\label{def:mcpT}
Let $(X,d)$ be a $b$-metric space with coefficient $s\geq 1$. A mapping $f:X\to X$ is said to be a \emph{ contracting perimeters of triangles mapping} (CPTM) if there exists a constant $q\in [0,1)$ such that for all $x,y,z\in X$ satisfying the distinctness condition $|\{x,y,z\}| = 3$, the following contractive inequality holds:
\begin{equation}
P(f(x),f(y),f(z))\leq qP(x,y,z). \label{eq:mcpT}
\end{equation}
\end{definition}

The strict demand in Definition \ref{def:mcpT} that the domain vertices $x,y,z$ must be pairwise distinct is a structural requirement of a perimetric theory. If two or more points coincide, $P(x,x,z) = 2d(x,z)$, and the expression collapses into a standard pairwise metric contraction, which changes the geometric scope of the operator.

To manipulate perimetric bounds under the $b$-triangle inequality, we will frequently utilize the following algebraic properties relating perimeters to basic pairwise distances.

\begin{lemma}
\label{lem:perimeter_relations}
Let $(X,d)$ be a $b$-metric space with coefficient $s\geq 1$. For any mutually distinct points $x,y,z\in X$, the following perimeter relations hold:
\begin{align}
d(x,y) &< P(x,y,z); \label{eq:perim_lb}\\
P(x,y,z) &\leq (s + 1)[d(x,y) + d(y,z)]. \label{eq:perim_ub}
\end{align}
\end{lemma}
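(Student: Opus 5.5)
The two inequalities in Lemma \ref{lem:perimeter_relations} are elementary, so the plan is to read both off directly from Definition \ref{def:bmetric} and the definition \eqref{eq:perimeter_def} of $P$, with no appeal to the limit machinery of Lemma \ref{lem:tracking}.

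For the lower bound \eqref{eq:perim_lb}, I will write
\[
P(x,y,z) = d(x,y) + \bigl[d(y,z) + d(z,x)\bigr].
\]
Since $x,y,z$ are mutually distinct, $z \neq y$, so $d(y,z) > 0$ by \eqref{eq:bmetric_zero}. Also $d(z,x)\ge 0$. Hence the bracket is strictly positive, which gives $d(x,y) < P(x,y,z)$. This is the only step where distinctness is used. Without it, $z=y$ would give $P(x,y,y)=2d(x,y)$, and the strict inequality would fail when $x=y$.

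For the upper bound \eqref{eq:perim_ub}, the task is to control the third side $d(z,x)$ by the other two. I will apply symmetry \eqref{eq:bmetric_sym} and the $b$-triangle inequality \eqref{eq:bmetric_tri_def} through the intermediate point $y$:
\[
d(z,x)=d(x,z)\le s\bigl[d(x,y)+d(y,z)\bigr].
\]
Adding $d(x,y)+d(y,z)$ to both sides gives
\[
P(x,y,z)\le (1+s)\bigl[d(x,y)+d(y,z)\bigr],
\]
which is exactly \eqref{eq:perim_ub}.

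I expect no genuine obstacle here. The only care needed is bookkeeping: route the $b$-triangle inequality through $y$, so the bound involves exactly the two sides $d(x,y)$ and $d(y,z)$, and keep the strictness in \eqref{eq:perim_lb} by invoking \eqref{eq:bmetric_zero} for the pair $y \neq z$.
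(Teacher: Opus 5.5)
Your proposal is correct and follows essentially the same argument as the paper. Strictness in \eqref{eq:perim_lb} comes from $d(y,z)+d(z,x)>0$ for distinct points. The bound \eqref{eq:perim_ub} comes from applying the $b$-triangle inequality to the third side $d(z,x)$ through $y$ and then grouping terms.
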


\begin{proof}
The first property follows directly from the non-negativity of the $b$-metric and the strict distinctness of the points, ensuring $d(y,z) + d(z,x) > 0$. For the second property, expanding $P(x,y,z)$ and applying the $b$-triangle inequality to the third term gives
\begin{align*}
P(x,y,z) &= d(x,y) + d(y,z) + d(z,x)\\
&\leq d(x,y) + d(y,z) + s[d(z,y) + d(y,x)].
\end{align*}
Grouping terms based on their respective symmetric distances, we have
\[
P(x,y,z)\leq (1 + s)d(x,y) + (1 + s)d(y,z) = (s + 1)[d(x,y) + d(y,z)],
\]
which completes the proof.
\end{proof}

\begin{remark}
Note that there exist graphic contractions that are not contracting perimeters of triangles mappings and do not have any iterate that is a contracting perimeters of triangles mapping.
\end{remark}

\begin{example}
\label{ex:identity}
Let $(X,d)$ be a $b$-metric space with constant $s\geq 1$, and $f:X\to X$ an identity mapping, that is, $f(x) = x$ for all $x\in X$. Then, for any $n\in \mathbb{N}$, we have $f^{n}(x) = x$ for all $x\in X$, and $f$ is a graphic contraction since, for any $q\in [0,1)$ and all $x\in X$,
\[
d(f(x),f^{2}(x)) = d(x,x) = 0\leq qd(x,f(x)) = qd(x,x) = 0.
\]
Thus the graphic contraction inequality holds trivially.

Note that, $f$ is not a  contracting perimeters of triangles mapping (CPTM). Indeed, for any three mutually distinct points $x,y,z\in X$, we have
\[
d(f(x),f(y)) + d(f(y),f(z)) + d(f(z),f(x)) = d(x,y) + d(y,z) + d(z,x).
\]
For $f$ to be an CPTM, there would need to exist a constant $q\in [0,1)$ such that
\[
d(x,y) + d(y,z) + d(z,x)\leq q[d(x,y) + d(y,z) + d(z,x)]
\]
for all mutually distinct $x,y,z\in X$. Since the sum $d(x,y) + d(y,z) + d(z,x) > 0$ for mutually distinct points (as $d$ is a $b$-metric and hence separates points) which implies that $1\leq q$, which contradicts the assumption that $q< 1$. Therefore, \eqref{eq:mcpT} does not hold for any $q\in [0,1)$, and hence $f$ is not a contracting perimeters of triangles mapping. The same argument applies to every iterate $f^{n}$ (which is again the identity mapping), so no iterate of $f$ is an CPTM either.
\end{example}

\begin{remark}
Moreover, the converse does not hold in general. Indeed, there are contracting perimeters of triangles mapping that are not graphic contractions.
\end{remark}

\begin{example}
\label{ex:not_graphic}
Let $X = \{a,b,c\}$ be a set with three distinct elements. Define a function $d:X\times X\to [0,\infty)$ by
\[
d(a,c) = d(c,a) = 3,\quad d(a,b) = d(b,a) = 1,\quad d(b,c) = d(c,b) = 1,
\]
and $d(x,x)=0$ for all $x\in X$. Note that $(X,d)$ is a $b$-metric space with constant $s = 2$. The only nontrivial triangle-type inequality is given by
\[
d(a,c) = 3\leq 2\big(d(a,b) + d(b,c)\big) = 4,
\]
whereas all other triples are degenerate or satisfy the classical triangle inequality. Thus the $b$-metric axioms hold.

Define a mapping $f:X\to X$ by
\[
f(a) = b,\qquad f(b) = c,\qquad f(c) = c.
\]
Clearly, $c$ is a fixed point.
\begin{enumerate}
\item $f$ is a contracting perimeters of triangles mapping (CPTM). Since $X$ has exactly three distinct points, the only mutually distinct triple is $(a,b,c)$. We compute the perimeter of the image triple:
\[
d(f(a),f(b)) + d(f(b),f(c)) + d(f(c),f(a)) = d(b,c) + d(c,c) + d(c,b) = 1 + 0 + 1 = 2.
\]
The perimeter of the original triple is
\[
d(a,b) + d(b,c) + d(c,a) = 5.
\]
We need a constant $q\in [0,1)$ such that $2\leq q\cdot 5$. Indeed $q = \frac{2}{5} = 0.4$ serves the purpose. Hence $f$ satisfies the perimeter contraction condition with $q = 2/5$.

\item Note that, $f$ is not a graphic contraction. Recall that a graphic contraction requires the existence of $\alpha \in [0,1)$ such that
\[
d(f(x),f^{2}(x))\leq \alpha d(x,f(x))\quad \text{for all } x\in X.
\]
Take $x = a$. Then $f(a) = b$, $f^{2}(a) = f(b) = c$. Thus
\[
d(f(a),f^{2}(a)) = d(b,c) = 1,\qquad d(a,f(a)) = d(a,b) = 1.
\]
Suppose that $f$ is a graphic contraction. Then, we have
\[
1\leq \alpha \cdot 1 \Longrightarrow \alpha \geq 1,
\]
which contradicts $\alpha < 1$. Therefore, no such $\alpha$ exists, and $f$ is not a graphic contraction.
\end{enumerate}
Hence, even in the $b$-metric setting, the class of contracting perimeters of triangles mapping is strictly larger than the class of graphic contractions: there exist CPTM that are not graphic contractions.
\end{example}

We prove that such mappings cannot have periodic orbits of length greater than two, a property essential for establishing fixed point theorems in complete $b$-metric spaces.

\begin{lemma}
\label{lem:no_period_gt2}
Let $(X,d)$ be a $b$-metric space with constant $s\geq 1$. If $f:X\to X$ is a contracting perimeters of triangles mapping, that is, a mapping satisfying \eqref{eq:mcpT}, then it has no periodic points of a prime period greater than two.
\end{lemma}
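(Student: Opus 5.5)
We argue by contradiction. Suppose $x\in X$ is a periodic point of $f$ with prime period $k\geq 3$, and set $x_i\coloneqq f^{i}(x)$ for $i\in\mathbb{N}$. Then $x_{i+k}=x_i$ for all $i$. Because the period is prime (minimal), the points $x_0,x_1,\dots,x_{k-1}$ are pairwise distinct. In particular, for every $i$ the three consecutive points $x_i,x_{i+1},x_{i+2}$ are mutually distinct, since their indices differ by $1$ or $2$, both less than $k$. Hence the distinctness condition $|\{x_i,x_{i+1},x_{i+2}\}|=3$ of Definition~\ref{def:mcpT} holds for each such triple. This is the only place where $k\geq 3$ is used; for $k=2$ the triple collapses to two points.

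The plan is to apply \eqref{eq:mcpT} repeatedly along the orbit. Write $p_i\coloneqq P(x_i,x_{i+1},x_{i+2})$. Since $f(x_j)=x_{j+1}$, the contractive inequality gives $p_{i+1}\leq q\,p_i$ for every $i\in\mathbb{N}$, and therefore
\[
p_k\leq q^{k}p_0 .
\]
Periodicity gives $(x_k,x_{k+1},x_{k+2})=(x_0,x_1,x_2)$, so $p_k=p_0$, and we obtain $p_0\leq q^k p_0$.

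Next, $p_0>0$, because $x_0,x_1,x_2$ are distinct and the $b$-metric separates points (compare \eqref{eq:perim_lb} of Lemma~\ref{lem:perimeter_relations}). Dividing by $p_0$ yields $1\leq q^k<1$, a contradiction. Hence no periodic point of prime period greater than two exists.

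No $b$-triangle inequality is needed, so the coefficient $s$ plays no role. The only step that needs care is the distinctness of the consecutive triples along the orbit, which is where minimality of the period enters; everything else is routine iteration.
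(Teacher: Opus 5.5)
Your proof is correct and follows essentially the same route as the paper: you get distinctness of consecutive orbit triples from minimality of the period $k\geq 3$, apply \eqref{eq:mcpT} $k$ times around the cycle, close the chain by periodicity, and divide by the positive perimeter to reach $1\leq q^k$. Your justification of that distinctness (index differences $1$ and $2$ are not multiples of $k\geq 3$) is slightly more explicit than the paper's.
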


\begin{proof}
Assume on contrary that $x\in X$ is a periodic point of the mapping $f$ with a prime period $k$ for some $k\geq 3$.

By the definition of a prime period, the elements within the finite orbit are distinct up to the $k$-th iterate. That is, $i\neq j$ implies $f^{i}(x)\neq f^{j}(x)$ for all $i,j\in \{0,1,\ldots,k - 1\}$. Consequently, for any integer $n$, the triple of consecutive iterates
\[
(f^{n}(x),f^{n + 1}(x),f^{n + 2}(x))
\]
consists of mutually distinct points.

In a $b$-metric space, the identity of indiscernibles holds exactly as it does in standard metric spaces:
\[
d(u,v) = 0\iff u = v.
\]
Since the points $x,f(x)$, and $f^{2}(x)$ are mutually distinct, their pairwise distances must be strictly positive:
\begin{equation}
d(x,f(x)) > 0,\quad d(f(x),f^{2}(x)) > 0,\quad \text{and}\quad d(f^{2}(x),x) > 0.
\label{eq:dist_pos}
\end{equation}
Thus, the perimeter of the triangle formed by these three points is strictly positive:
\begin{equation}
P(x,f(x),f^{2}(x))\coloneqq d(x,f(x)) + d(f(x),f^{2}(x)) + d(f^{2}(x),x) > 0.
\label{eq:perim_pos}
\end{equation}

As every consecutive triple along the orbit consists of mutually distinct points, the perimeter contraction condition \eqref{eq:mcpT} is applicable  at each step. Applying this condition successively $k$ times along the periodic cycle yields:
\begin{align}
P(f^{k}(x),f^{k + 1}(x),f^{k + 2}(x)) &\leq q P(f^{k - 1}(x),f^{k}(x),f^{k + 1}(x)) \notag\\
&\leq q^{2}P(f^{k - 2}(x),f^{k - 1}(x),f^{k}(x)) \notag\\
&\qquad \vdots \notag\\
&\leq q^{k}P(x,f(x),f^{2}(x)).
\label{eq:period_chain}
\end{align}

Since $x$ is a periodic point of period $k$, we have
\[
f^{k}(x) = x,\qquad f^{k + 1}(x) = f(x),\qquad f^{k + 2}(x) = f^{2}(x).
\]
Substituting these periodic identities into the left-hand side of the chain \eqref{eq:period_chain}, we obtain:
\begin{equation}
P(x,f(x),f^{2}(x))\leq q^{k}P(x,f(x),f^{2}(x)).
\label{eq:period_ineq}
\end{equation}
From \eqref{eq:perim_pos}, we have $P(x,f(x),f^{2}(x)) > 0$. Dividing both sides of \eqref{eq:period_ineq} by this positive quantity yields:
\begin{equation}
1\leq q^{k}.
\label{eq:qk}
\end{equation}
However, the definition of a perimeter contraction requires the contractive constant to satisfy $q\in [0,1)$. For any $k\geq 3$, it necessarily holds that $q^{k}<1$, which contradicts \eqref{eq:qk}.

Thus, our assumption leads to a contradiction. Hence, a contracting perimeters of triangles mapping on a $b$-metric space cannot possess any periodic points of a prime period greater than two.
\end{proof}

\begin{remark}
It is worth mentioning that a constant $s\geq 1$ of the $b$-metric does not appear in the proof given above, indicating that the lemma holds uniformly for all $b$-metric spaces, regardless of the coefficient. This is a notable feature of perimeter contractions which distinguish them  from standard Banach-type contractions whose behavior often depends on $s$.
\end{remark}

\section{Main Results}

In this section, we first show that global continuity is an intrinsic geometric property of any CPTM in a $b$-metric space, requiring neither the completeness of the space nor any restrictions on periodic orbits. Separating this regularizing feature as an important proposition facilitates the presentation of our main result.

\begin{proposition}
\label{prop:continuity}
Let $(X,d)$ be a $b$-metric space with parameter $s\geq 1$. If $f:X\to X$ is a contracting perimeters of triangles mapping (CPTM), then $f$ is continuous on $X$.
\end{proposition}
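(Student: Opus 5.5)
We prove sequential continuity, which suffices in $b$-metric spaces since their topology is first countable: convergence $x_n\to x$ means $d(x_n,x)\to 0$. Fix $x\in X$ and a sequence $x_n\to x$. We must show $d(f(x_n),f(x))\to 0$. Indices with $x_n=x$ contribute zero, so we may discard them and assume $x_n\neq x$ for all $n$.

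The idea is to insert a third vertex so that the perimetric condition \eqref{eq:mcpT} applies. We split into two cases according to the cardinality of $X$.

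\textbf{Case $|X|\ge 3$.} For each $n$ choose an auxiliary point $z_n\notin\{x,x_n\}$ to form a nondegenerate triangle $(x,x_n,z_n)$. We want the perimeter $P(x,x_n,z_n)$ to be small, but this fails if $z_n$ is fixed, because $d(x_n,z)+d(z,x)$ stays bounded away from zero. The natural remedy is to take $z_n=x_m$ for another index $m$, using the sequence itself as the third vertex.

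Since $x_n\to x$ and $x_n\ne x$, the sequence cannot be eventually constant (a constant tail would equal $x$). Hence for each $n$ we can pick $m=m(n)>n$ with $x_m\notin\{x,x_n\}$ and $d(x_m,x)\le d(x_n,x)$. Such an $m$ exists because $d(x_m,x)\to0$ while $d(x_n,x)>0$, and infinitely many distinct values occur.

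Then, by \eqref{eq:perim_lb} and the $b$-triangle inequality,
\[
d(f(x_n),f(x))\le P(f(x),f(x_n),f(x_m))\le q\,P(x,x_n,x_m)\le q\bigl[d(x,x_n)+s(d(x_n,x)+d(x,x_m))+d(x_m,x)\bigr].
\]
The right-hand side is at most $q(2+2s)\,d(x_n,x)$, which tends to $0$. So $f(x_n)\to f(x)$. We use only the strict inequality $d(u,v)<P$ from Lemma~\ref{lem:perimeter_relations}, and we never need continuity of $d$.

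One subtlety is the case where the sequence takes only finitely many values infinitely often. This cannot happen when $x_n\ne x$, since each value $y\ne x$ has $d(y,x)>0$ and can recur only finitely often; so infinitely many distinct values appear and the choice of $m$ is legitimate.

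\textbf{Case $|X|\le2$.} Every sequence converging to $x$ is eventually equal to $x$, because $d$ takes only finitely many positive values. Continuity is therefore trivial.

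The main obstacle is securing mutual distinctness of the triple while keeping the perimeter controlled by $d(x_n,x)$. Selecting the third vertex from the tail of the sequence itself resolves this. The rest is routine application of \eqref{eq:mcpT} and \eqref{eq:bmetric_tri_def}.
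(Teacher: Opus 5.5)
Your proof is correct and uses the same mechanism as the paper. You apply \eqref{eq:mcpT} to the triple $(x,x_n,x_m)$ whose third vertex is another term of the sequence, bound $P(x,x_n,x_m)$ via the $b$-triangle inequality, and drop the extra terms of the image perimeter. Your choice of $m$ with $d(x_m,x)\le d(x_n,x)$ just makes the argument direct. It dispenses with the paper's contradiction setup, its finite/infinite image-set case split, and the $\limsup$ over the second index; that limit gives the paper the slightly better constant $q(s+1)$ versus your $2q(s+1)$.

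One small correction: the first inequality in your display needs only the trivial bound $d(u,v)\le P(u,v,w)$, which comes from nonnegativity of $d$. It does not need the strict inequality \eqref{eq:perim_lb}, whose distinctness hypothesis on $f(x),f(x_n),f(x_m)$ you have not verified (and need not).
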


\begin{proof}
Let $x\in X$ be an arbitrary point, and let $(y_{m})_{m\in \mathbb{N}}$ be a sequence in $X$ converging to $x$, so that $\lim_{m\to \infty}d(y_{m},x) = 0$. To prove continuity, it suffices to show that $\lim_{m\to \infty}f(y_{m}) = f(x)$.

Assume for contrary that $f(y_{m})\not\to f(x)$. By definition of non-convergence, there exist an $\epsilon >0$ and a subsequence $(y_{m_k})_{k\in \mathbb{N}}$ such that
\[
d(f(y_{m_k}),f(x))\geq \epsilon \quad \forall k\in \mathbb{N}.
\]
We now divide the proof into following four steps.\newline
\textbf{Step 1}:  We first establish that we can extract a subsequence of $(y_{m_k})$ consisting of pairwise distinct points, all different from $x$.

If $y_{m_k} = x$ for infinitely many indices, then for those indices we would have $d(f(y_{m_k}),f(x)) = d(f(x),f(x)) = 0$, directly contradicting the assumption that $d(f(y_{m_k}),f(x))\geq \epsilon$. Thus, $x$ can appear at most finitely many times in the sequence $(y_{m_k})$. Discarding these finite terms allows us to assume without loss of generality that $y_{m_k}\neq x$ for all $k\in \mathbb{N}$. Next, we claim that any unique value $z\in X\backslash \{x\}$ can appear at most finitely many times in $(y_{m_k})$. Suppose some $z\neq x$ appears infinitely many times. Then there exists a subsequence identically equal to $z$. Because any subsequence of a convergent sequence must share the same limit, this constant subsequence must converge to $x$. By the identity of indiscernibles, this implies $d(z,x) = 0$, yielding $z = x$, which is a contradiction.

Since each unique value occurs only finitely many times across an infinite sequence, the set of values $\{y_{m_k}:k\in \mathbb{N}\}$ must be infinite. Therefore, we can inductively extract a subsequence which we reindex and denote as $(y_{m_i})_{i\in \mathbb{N}}$ such that $y_{m_i}\neq y_{m_j}$ for all $i\neq j$. Thus, the set $\{x\} \cup \{y_{m_i}:i\in \mathbb{N}\}$ consists of mutually distinct points.

\textbf{Step 2}: This step deals with the finite image set case (Pigeonhole Principle).\newline Let $F = \{f(y_{m_i}):i\in \mathbb{N}\}$ be the set of image values under $f$. Suppose on contrary that $F$ is a finite set.

By the Pigeonhole Principle, since the index set $\mathbb{N}$ is infinite and $F$ is finite, there must exist a specific point $c\in X$ and an infinite subsequence of $(y_{m_i})$ (which we still denote as $(y_{m_i})$ for simplicity) such that $f(y_{m_i}) = c$ for all $i\in \mathbb{N}$. As $d(f(y_{m_i}),f(x))\geq \epsilon >0$, $c\neq f(x)$.

Now, select any two distinct indices $i\neq j$. The domain elements $x,y_{m_i},y_{m_j}$ are mutually distinct by construction. Applying the CPTM contractive condition to this triple, we have
\[
P(f(x),f(y_{m_i}),f(y_{m_j}))\leq qP(x,y_{m_i},y_{m_j}).
\]
Substituting $f(y_{m_i}) = f(y_{m_j}) = c$ into the left-hand side, we get
\[
P(f(x),c,c) = d(f(x),c) + d(c,c) + d(c,f(x)) = 2d(f(x),c).
\]
Using the $b$-triangle inequality to expand the domain perimeter on the right-hand side, and noting that $d(y_{m_i},y_{m_j})\leq s[d(y_{m_i},x) + d(x,y_{m_j})]$, the inequality becomes
\[
2d(f(x),c)\leq q(s + 1)\left[d(x,y_{m_i}) + d(x,y_{m_j})\right].
\]
As this holds for all distinct $i$ and $j$, on taking limit as $i,j\to \infty$ and applying the Squeeze Theorem (since $y_{m_i},y_{m_j}\to x$), we obtain that
\[
2d(f(x),c)\leq q(s + 1)[0 + 0] = 0\Rightarrow d(f(x),c) = 0\Rightarrow c = f(x),
\]
a contradiction.

\textbf{Step 3}: This step deals with formal inductive construction for the infinite image case.\newline As $F$ is infinite, we can construct a further subsequence whose images are completely distinct from each other and from $f(x)$. We define this subsequence $(y_{m_{i_k}})_{k \in \mathbb{N}}$ inductively:

\textit{Base Step $(k = 1)$:} Choose $i_{1} = 1$. Since $F$ is infinite, $F\backslash \{f(x)\}$ remains infinite; hence, we can ensure $f(y_{m_{i_1}})\neq f(x)$.\newline \textit{Inductive Step:} Assume we have successfully chosen indices $i_1< i_2< \dots < i_k$ such that the set $\{f(y_{m_{i_1}}),\ldots,f(y_{m_{i_k}})\}$ consists of $k$ distinct values, none of which equal $f(x)$. Consider the tail set $F_{k} = \{f(y_{m_{i}}):i > i_{k}\}$. Removing a finite number of elements from the infinite set $F$ leaves $F_{k}$ infinite. Therefore, $F_{k}$ cannot be contained within the finite set $V_{k} = \{f(x),f(y_{m_{i_1}}),\ldots,f(y_{m_{i_k}})\}$. Thus, there must exist an index $i_{k + 1} > i_{k}$ such that $f(y_{m_{i_{k + 1}}})\notin V_{k}$.

By mathematical induction, we obtain a subsequence where both the domain elements are pairwise distinct and their images are pairwise distinct and omit $f(x)$. To simplify notation, let us re-index this and obtain a subsequence as $(w_{k})_{k \in \mathbb{N}}$.

\textbf{Step 4}: This step is about analytical Squeezing and limit Passing.\newline For any two distinct indices $k \neq l$, the domain triple $(x, w_k, w_l)$ and the image triple $(f(x), f(w_k), f(w_l))$ both consist of mutually distinct points. Applying the perimetric contraction condition yields
\[
P(f(x),f(w_k),f(w_l))\leq qP(x,w_k,w_l).
\]
Expanding the perimeters gives
\[
d(f(x),f(w_k)) + d(f(w_k),f(w_l)) + d(f(w_l),f(x))\leq q[d(x,w_k) + d(w_k,w_l) + d(w_l,x)].
\]
Bounding the right-hand side via $d(w_k, w_l) \leq s[d(w_k, x) + d(x, w_l)]$ leads to
\[
d(f(x),f(w_k)) + d(f(w_k),f(w_l)) + d(f(w_l),f(x))\leq q(s + 1)[d(x,w_k) + d(x,w_l)].
\]
Since all metrics on the left-hand side are strictly non-negative, we can drop the second and third terms to isolate the first term:
\begin{equation}
d(f(x),f(w_k))\leq q(s + 1)[d(x,w_k) + d(x,w_l)].
\label{eq:cont_step4}
\end{equation}
Inequality~\eqref{eq:cont_step4} holds strictly for a fixed index $k$ and any choice of $l \in \mathbb{N} \setminus \{k\}$. To evaluate this behavior as $l \to \infty$, we must account for the fact that the $b$-metric $d(\cdot, \cdot)$ is generally not continuous in both variables simultaneously. To rigorously bypass this structural limitation, we take the limit supremum as $l \to \infty$ on both sides of~\eqref{eq:cont_step4}. 

Since the left-hand side is completely independent of $l$, it behaves as a constant bound relative to the running index. Utilizing the fact that $\lim_{l \to \infty} d(x, w_l) = 0$, Lemma~2.2 justifies passing the limit into the right-hand side, yielding
\[
d(f(x), f(w_k)) \le \limsup_{l \to \infty} q(s + 1) \left[ d(x, w_k) + d(x, w_l) \right] = q(s + 1) d(x, w_k).
\]
Finally, taking the limit as $k \to \infty$ on both sides of this reduced inequality, and noting that $\lim_{k \to \infty} d(x, w_k) = 0$, the standard Squeeze Theorem gives 
\[
\lim_{k \to \infty} d(f(x), f(w_k)) = 0.
\]
This contradicts our  assumption that $d(f(w_k), f(x)) \ge \epsilon > 0$ for all $k \in \mathbb{N}$. Thus, our assumption is wrong, and hence $f$ is continuous at $x$. Since $x \in X$ was arbitrarily selected, $f$ is continuous on $X$.
\end{proof}

\begin{theorem}
\label{thm:main_nonperiodic}
Let $(X,d)$ be a complete $b$-metric space with constant $s\geq 1$ such that $|X|\geq 3$. If $f:X\to X$ is an CPTM with contraction constant $q\in [0,1)$ that has no periodic points of prime period two. Then there exists $n_0\in \mathbb{N}$ such that for all $n\geq n_0$ we have $sq^n< 1$, and $f^n$ is a continuous graphic contraction on $(X,d)$ with coefficient
\begin{equation}
\lambda_{n} = \frac{sq^{n}}{1 - sq^{n}} < 1.
\label{eq:lambda_n}
\end{equation}
\end{theorem}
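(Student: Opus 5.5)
The plan is to verify the graphic contraction inequality for $f^n$ pointwise along orbits, using Lemma~\ref{lem:no_period_gt2} and the hypothesis to control coincidences inside an orbit.

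\textbf{Threshold and continuity.} Since $q<1$, I would choose $n_0$ so that $sq^{n_0}<\tfrac12$. Then for all $n\ge n_0$ we get $sq^n<1$ and also $\lambda_n=sq^n/(1-sq^n)<1$. Continuity of $f^n$ is immediate, because Proposition~\ref{prop:continuity} makes $f$ continuous and compositions of continuous maps are continuous. What remains is
\[
d(f^{2n}(x),f^n(x))\le \lambda_n\, d(f^n(x),x)\qquad\text{for all }x\in X .
\]

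\textbf{Orbit structure.} Fix $x$ and write $x_k=f^k(x)$. If $x_i=x_j$ for some $i<j$, then $x_i$ is periodic. Lemma~\ref{lem:no_period_gt2} rules out prime periods $\ge 3$, and the hypothesis rules out prime period two. So $x_i$ is a fixed point, and the orbit is eventually constant. This leaves two cases.

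\emph{Case A: the orbit is injective.} All points $x_0,x_1,\dots$ are pairwise distinct. I would then apply \eqref{eq:mcpT} $n$ times to the triple $(x_0,x_n,x_{2n})$. Every intermediate triple $(x_k,x_{n+k},x_{2n+k})$ consists of mutually distinct points, so each step is legitimate, and
\[
d(x_n,x_{2n})\le P(x_n,x_{2n},x_{3n})\le q^n P(x_0,x_n,x_{2n}).
\]
I would bound the term $d(x_{2n},x_0)$ on the right by $s[d(x_0,x_n)+d(x_n,x_{2n})]$ and absorb the $d(x_n,x_{2n})$ terms on the left. This gives an estimate of the form $d(x_n,x_{2n})\le C\,q^n d(x_0,x_n)/(1-C'q^n)$. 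Here I expect to have to tune the bookkeeping, or the choice of which triple to contract, so that the constant comes out as $sq^n/(1-sq^n)$ rather than the cruder $(s+1)q^n/(1-(s+1)q^n)$. If the cruder constant is all I can get, I would enlarge $n_0$ accordingly; the structure of the argument is unchanged.

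\emph{Case B: the orbit reaches a fixed point $p=x_m$, with $m$ minimal.} If $m\le n$, then $x_n=x_{2n}=p$ and the inequality is trivial. If $x=p$ it is also trivial. So assume $n<m$. The points $x_0,\dots,x_m$ are pairwise distinct. The triple $(x_0,x_n,x_{2n})$ stays mutually distinct only while every index is $<m$. Once $x_{2n+k}$ hits $p$ the triple degenerates, and \eqref{eq:mcpT} cannot be applied further.

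\textbf{Main obstacle.} The key difficulty is this degenerate Case B, where distinctness fails partway through the iteration. For it I would first try replacing the triple by $(x_{k},x_{n+k},p)$ with $p$ the fixed point. This triple remains mutually distinct as long as $n+k<m$, and its image is $(x_{k+1},x_{n+k+1},p)$, so the contraction can be iterated up to step $m-n-1$. Separately, I would use the consecutive-triple decay $P(x_k,x_{k+1},x_{k+2})\le q^kP(x_0,x_1,x_2)$, valid for $k+2\le m$. I would then combine these with the $b$-triangle inequality to bound $d(x_n,x_{2n})\le s[d(x_n,p)+d(p,x_{2n})]$ by $q^n$ times a perimeter involving $x_0$ and $x_n$, and compare that perimeter with $d(x_0,x_n)$ to reach the same $\lambda_n$.
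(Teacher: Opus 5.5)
Your handling of continuity and of the threshold is fine. Your choice $sq^{n_0}<\tfrac12$ is in fact the right one: the paper only asks for $sq^n<1$, which does not give $\lambda_n<1$. There are, however, two genuine gaps.

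\textbf{The coefficient in Case A.} The coefficient $\lambda_n$ is part of the statement, so your fallback (a cruder constant plus a larger $n_0$) does not prove the theorem. The missing step is a lower bound on the \emph{left} perimeter through its third vertex. From $d(x_n,x_{2n})\le s[d(x_{2n},x_{3n})+d(x_{3n},x_n)]$ you get $P(x_n,x_{2n},x_{3n})\ge \frac{s+1}{s}\,d(x_n,x_{2n})$. Combined with $P(x_0,x_n,x_{2n})\le (s+1)[d(x_0,x_n)+d(x_n,x_{2n})]$, this gives $d(x_n,x_{2n})\le sq^n[d(x_0,x_n)+d(x_n,x_{2n})]$, which is exactly $\lambda_n$. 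This is the paper's computation.

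\textbf{Case B is not closed, and its target is false with your $n_0$.} Your case split is slightly off. The triple $(x_k,x_{n+k},x_{2n+k})$ stays mutually distinct exactly while $n+k<m$. So for $m\ge 2n$ the $n$-fold chain is legitimate and the Case A estimate applies verbatim; a degenerate final triple is harmless, since $P(a,b,b)=2d(a,b)\ge\frac{s+1}{s}d(a,b)$. The real problem is the range $n<m<2n$, and there $\lambda_n$ cannot be reached.

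Here is a counterexample. Take $X=\{3,-1,1,0\}\subset\mathbb{R}$ with the usual metric ($s=1$) and $f(3)=-1$, $f(-1)=1$, $f(1)=0$, $f(0)=0$.
\begin{itemize}
\item The four triples have perimeter ratios $\frac48,\frac48,\frac26,\frac24$, so $f$ is a CPTM with $q=\frac12$, and $0$ is its only periodic point.
\item For $n=2$ we have $sq^2=\frac14<\frac12$ and $\lambda_2=\frac13$.
\item At $x=3$ (so $m=3$), however, $d(f^2x,f^4x)=d(1,0)=1>\frac13\cdot 2=\lambda_2\,d(x,f^2x)$.
\end{itemize}

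What your ingredients do give in this range is a weaker coefficient. Consecutive triples $(x_k,x_{k+1},p)$ yield $\frac{s+1}{s}d(x_n,p)\le q^nP(x_0,x_1,p)$. One step applied to $(x_0,x_n,p)$ yields $d(x_1,p)\le sq[d(x_0,x_n)+d(x_n,p)]$. Together these give $d(x_n,x_{2n})\le \frac{s^2(1+q)q^n}{1-s^2(1+q)q^n}\,d(x_0,x_n)$.

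The paper's proof has the same hole. It applies the contraction $n$ times assuming only that $(x,f^nx,f^{2n}x)$ is mutually distinct. In the example this produces the false inequality $P(f^2x,f^4x,f^6x)=2\le q^2P(x,f^2x,f^4x)=\frac32$. So you located the right weak point, but neither your proposal nor the paper establishes the coefficient $\lambda_n$ when $n<m<2n$.
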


\begin{proof}
	We divide the proof of the theorem into three steps.\\
\textbf{Step 1}:  Since $q\in [0,1)$, $\lim_{n\to \infty}s q^{n} = 0$. Hence there is $n_0$ such that for all $n\geq n_0$, $sq^n< 1$; then $\lambda_{n}$ is well-defined and lies in $[0,1)$.

\textbf{Step 2}: In this step, we give graphic contraction estimate for $f^n$ ($n\geq n_0$). Fix $n\geq n_0$ and $x\in X$. Consider the triple $(x, f^n x, f^{2n} x)$.

If the points are not mutually distinct, then at least two coincide. We claim that in all cases this forces $d(f^n x, f^{2n} x)=0$. Indeed:
\begin{itemize}
\item If $x = f^n x$, then $x$ is periodic with period dividing $n$. Since no period 2 exists and by Lemma \ref{lem:no_period_gt2} the prime period cannot exceed 2, the period must be 1, so $x$ is a fixed point, and hence $f^n x = x$ and $f^{2n}x = x$, giving distance equal to 0.
\item If $x = f^{2n} x$, then $x$ is periodic with period dividing $2n$. Following arguments similar to those given above, the period is 1, so $f^n x = x$, again gives the  distance equal to 0.
\item If $f^n x = f^{2n} x$, then, it is trivial that distance is equal to 0.
\end{itemize}
Thus in all degenerate cases the graphic contraction inequality holds with left side 0.

Now assume $x, f^n x, f^{2n} x$ are mutually distinct. Then we can apply the perimeter contraction condition $n$ times to the consecutive triples, yielding:
\begin{equation}
P(f^n x, f^{2n} x, f^{3n} x) \leq q^n P(x, f^n x, f^{2n} x).
\label{eq:main_triple}
\end{equation}
Using Lemma \ref{lem:perimeter_relations} and the $b$-triangle inequality exactly as in the proof of Theorem 3.2 (lower bounding the left side by $\frac{s+1}{s} d(f^n x, f^{2n} x)$ and upper bounding the right side by $(s+1)[d(x,f^n x)+d(f^n x, f^{2n} x)]$), we obtain:
\[
d(f^n x, f^{2n} x) \leq sq^n [d(x,f^n x) + d(f^n x, f^{2n} x)].
\]
Rearranging and using $sq^n < 1$ yields:
\begin{equation}
d(f^n x, f^{2n} x) \leq \frac{sq^n}{1 - sq^n} d(x, f^n x).
\label{eq:main_graphic}
\end{equation}
Thus $f^n$ is a graphic contraction with coefficient $\lambda_n$.

\textbf{Step 3}: We now prove that  the graph of $f$ is closed and the existence of a fixed point of the mapping $f$. By Proposition \ref{prop:continuity}, $f$ is continuous on $X$. Since $b$-metric spaces are Hausdorff (the metric separates points) \cite{Kunzi2022}, the graph of a continuous map into a Hausdorff space is closed [Theorem 2.9.3 of \cite{malkowsky2019}]. Hence $G_f$ is closed.

Since $f^{n_0}$ is a graphic contraction with $s\lambda_{n_0}<1$, Theorem \ref{thm:graphic_basic} applies and guarantees that the Picard iterates converge to a fixed point of $f^{n_0}$. Let $x^*\in \operatorname{Fix}(f^{n_0})$. Then $x^*$ is periodic for $f$; its prime period divides $n_0$. By Lemma \ref{lem:no_period_gt2} and the exclusion of period 2, the period is 1, so $f(x^*)=x^*$. Thus $x^*\in \operatorname{Fix}(f)$, completing the proof.
\end{proof}

\begin{example}
\label{ex:shift_map}
Let $X = \{x_{n} \mid n \in \mathbb{N}_{0}\}$ be a set of distinct points. Define a symmetric function $d: X \times X \to \mathbb{R}_{+}$ by $d(x_{i}, x_{j}) = 0$ for all $i$, and for $i < j$:
\[
d(x_i, x_j) =
\begin{cases}
10, & (i,j) = (0,1), (1,2), (1,3),\\
14, & (i,j) = (0,2),\\
15, & (i,j) = (0,3), (2,3),\\
3^j, & j \geq 4.
\end{cases}
\]
For $j<4$ the distances are:
\[
d(x_{0},x_{1}) = 10,\quad d(x_{1},x_{2}) = 10,\quad d(x_{1},x_{3}) = 10,\quad d(x_{0},x_{2}) = 14,\quad d(x_{0},x_{3}) = 15,\quad d(x_{2},x_{3}) = 15.
\]
We now make and prove the following claims:\newline
\noindent\textbf{Claim 1:} $d$ is a metric.

\noindent The function $d$ satisfies the triangle inequality.

\textit{Proof.} Take three mutually distinct points $x_{i},x_{j},x_{k}$ with $i< j< k$.

Case 1: If $k\geq 4$. Then $d(x_{i},x_{k}) = d(x_{j},x_{k}) = 3^{k}$. We check the three inequalities:
\begin{enumerate}
\item $d(x_{i},x_{j})\leq d(x_{i},x_{k}) + d(x_{j},x_{k}) = 2\cdot 3^{k}$. If $j\geq 4$, then $d(x_{i},x_{j}) = 3^{j}\leq 3^{k-1}<2\cdot 3^{k}$; if $j<4$, then $d(x_{i},x_{j})\leq 15\leq 2\cdot 3^{4}\leq 2\cdot 3^{k}$.
\item $d(x_{i},x_{k})\leq d(x_{i},x_{j}) + d(x_{j},x_{k})$ becomes $3^{k}\leq d(x_{i},x_{j}) + 3^{k}$, which is true.
\item $d(x_{j},x_{k})\leq d(x_{i},x_{j}) + d(x_{i},x_{k})$ which is true.
\end{enumerate}

Case 2: $k<4$. All points belong to $\{x_{0},x_{1},x_{2},x_{3}\}$. Direct verification gives the triangle inequality for all triples. Thus the triangle inequality holds in all cases, so $(X,d)$ is a metric space.

Now fix $p\geq 1$ and define a new distance given by
\[
d_{b}(x,y)\coloneqq \left(d(x,y)\right)^{p}.
\]
By the elementary inequality $(a + b)^{p}\leq 2^{p - 1}(a^{p} + b^{p})$, $(X,d_{b})$ is a $b$-metric space with parameter $s = 2^{p - 1}$ (see \cite{negash2025}). When $p > 1$, this is a genuine $b$-metric (the usual triangle inequality may fail).

\noindent\textbf{Claim 2:} The $b$-metric space $(X,d_{b})$ is complete.

\textit{Proof.} For any distinct $x_{i},x_{j}$, we have $d(x_{i},x_{j})\geq 10$, hence $d_{b}(x_{i},x_{j})\geq 10^{p} > 0$. Let $(y_{n})$ be a Cauchy sequence. Choose $0< \epsilon < 10^{p}$. Then there exists $N$ such that $d_{b}(y_{m},y_{n})< \epsilon$ for all $m,n\geq N$. Since any two distinct points are separated by at least $10^{p}$, this forces $y_{m} = y_{n}$ for all $m,n\geq N$. Thus the sequence is eventually constant and converges in $X$. Hence $(X,d_{b})$ is complete. $\square$

Now define the shift map $f:X\to X$ by
\[
f(x_0) = x_0,\qquad f(x_n) = x_{n - 1}\quad (n\geq 1).
\]
The unique fixed point is $x_{0}$, and $f$ has no periodic points of prime period 2 (or any period $>1$), since every positive index strictly decreases under $f$.

\noindent\textbf{Claim 3:} The mapping $f$ is a perimeter contracting mapping.

There exists a constant $q<1$ such that for all mutually distinct $x_{i},x_{j},x_{k}\in X$
\[
P(fx_{i},fx_{j},fx_{k})\leq qP(x_{i},x_{j},x_{k}),
\]
where $P(u,v,w) = d_{b}(u,v) + d_{b}(v,w) + d_{b}(w,u)$.

To prove this, we proceed as follows:\newline Order the indices as $i< j< k$. Define the perimeter ratio
\[
R(i,j,k)\coloneqq \frac{P(fx_{i},fx_{j},fx_{k})}{P(x_{i},x_{j},x_{k})}.
\]
We partition the triples into two cases.

Case: $k\geq 5$. Then $d(x_{i},x_{k}) = d(x_{j},x_{k}) = 3^{k}$, so $P(x_{i},x_{j},x_{k}) = d_{b}(x_{i},x_{j}) + 2\cdot 3^{k p}$. Under $f$, the largest index becomes $k - 1\geq 4$, and
\[
P(fx_{i},fx_{j},fx_{k}) = d_{b}(fx_{i},fx_{j}) + 2\cdot 3^{(k - 1)p}.
\]
Thus
\[
R(i,j,k) = \frac{d_{b}(fx_{i},fx_{j}) + 2\cdot 3^{(k - 1)p}}{d_{b}(x_{i},x_{j}) + 2\cdot 3^{kp}}.
\]
If $j\geq 5$, then $d_{b}(fx_{i},fx_{j}) = 3^{(j - 1)p} = 3^{- p}d_{b}(x_{i},x_{j})$, and $R = 3^{- p}$.

If $j\leq 4$, then $d_{b}(x_{i},x_{j})\geq 10^{p}$ and $d_{b}(fx_{i},fx_{j})\leq 15^{p}$. Set $u = 3^{(k - 1)p}\geq 3^{4p} = 81^{p}$. Then
\[
R(i,j,k)\leq \frac{15^{p} + 2u}{10^{p} + 2\cdot 3^{p}\cdot u} =: g(u).
\]
The function $g$ is strictly decreasing because its derivative is
\[
g^{\prime}(u) = \frac{2(10^{p}) - 2\cdot 3^{p}(15^{p})}{(10^{p} + 2\cdot 3^{p}u)^{2}} < 0.
\]
Thus the supremum over $u\geq 81^{p}$ is attained at $u = 81^{p}$:
\[
R(i,j,k)\leq \frac{15^{p} + 2\cdot 81^{p}}{10^{p} + 2\cdot 243^{p}} < 1.
\]

Core case: $k\leq 4$. There are only $\binom{5}{3} = 10$ triples from $\{x_{0},\ldots,x_{4}\}$. Each ratio $R(i,j,k)$ is strictly less than 1; for example, if we take $(0,1,4)$, then
\[
P(x_{0},x_{1},x_{4}) = 10^{p} + 2\cdot 81^{p},\quad P(fx_{0},fx_{1},fx_{4}) = P(x_{0},x_{0},x_{3}) = 2\cdot 15^{p},
\]
so $R = \frac{2\cdot 15^{p}}{10^{p} + 2\cdot 81^{p}} < 1$. Hence the maximum over these finitely many triples is $< 1$.

Taking
\[
q\coloneqq \max \left\{\underset {\text{core}}{\max}R,\frac{15^{p} + 2\cdot 81^{p}}{10^{p} + 2\cdot 243^{p}}\right\} < 1,
\]
we obtain the uniform perimeter contraction inequality.

We now examine whether $f^{n}$ is a graphic contraction for $n = 1,2,3$.

\noindent\textbf{Claim 4:} The mappings $f$ and $f^{2}$ are not graphic contractions.

\textit{Proof.} For $f$, take $x = x_{2}$. Then $f x = x_{1}$ and $f^{2}x = x_{0}$. The ratio is
\[
\frac{d_{b}(f^{2}x,f x)}{d_{b}(f x,x)} = \frac{d_{b}(x_{0},x_{1})}{d_{b}(x_{1},x_{2})} = \frac{10^{p}}{10^{p}} = 1.
\]
Hence, we may not find any $\lambda < 1$.

For $f^{2}$, take $x = x_{3}$. Then $f^{2}x = x_{1}$ and $f^{4}x = x_{0}$. The ratio is again 1:
\[
\frac{d_{b}(f^{4}x,f^{2}x)}{d_{b}(f^{2}x,x)} = \frac{d_{b}(x_{0},x_{1})}{d_{b}(x_{1},x_{3})} = \frac{10^{p}}{10^{p}} = 1.
\]
Thus $f^{2}$ is not graphic.

\noindent\textbf{Claim 5:} $f^{3}$ is graphic.

The third iterate $f^{3}$ is a graphic contraction with constant
\[
\lambda_{b} = \left(\frac{10}{81}\right)^{p}< 1.
\]

\textit{Proof.} We compute the ratio for all $x_{n}\in X$:

If $n\leq 3$, then $f^{3}x_{n} = x_{0}$ and $f^{6}x_{n} = x_{0}$, so the numerator is 0.

For $n = 4$: $f^{3}x_{4} = x_{1}$, $f^{6}x_{4} = x_{0}$, ratio is given as follows:
\[
\frac{d_{b}(x_{1},x_{0})}{d_{b}(x_{4},x_{1})} = \frac{10^{p}}{81^{p}} = \left(\frac{10}{81}\right)^{p}.
\]
For $n = 5$: $f^{3}x_{5} = x_{2}$, $f^{6}x_{5} = x_{0}$, ratio is given by
\[
\frac{d_{b}(x_{2},x_{0})}{d_{b}(x_{5},x_{2})} = \frac{14^{p}}{243^{p}} = \left(\frac{14}{243}\right)^{p}.
\]
For $n\geq 6$: $f^{3}x_{n} = x_{n - 3}$, $f^{6}x_{n} = x_{n - 6}$, we have
\[
\frac{d_{b}(x_{n - 3},x_{n - 6})}{d_{b}(x_{n},x_{n - 3})} = \frac{(3^{n - 3})^{p}}{(3^{n})^{p}} = 3^{-3p}.
\]
Thus the supremum of all ratios is
\[
\lambda_{b} = \max \left\{\left(\frac{10}{81}\right)^{p},\left(\frac{14}{243}\right)^{p},3^{-3p}\right\}.
\]
Since $\frac{10}{81} >\frac{14}{243} >\frac{1}{27}$, we have
\[
\lambda_{b} = \left(\frac{10}{81}\right)^{p}< 1.
\]
Hence $f^{3}$ is a graphic contraction.
\end{example}

The existence of a fixed point of a contracting perimeters of triangles mapping without periodic points of a prime period two on a complete $b$-metric space can be directly derived from Theorem \ref{thm:graphic_basic}.

\begin{corollary}
\label{cor:fixedpoint}
If $f:X\to X$ is a  contracting perimeters of triangles mapping without periodic points of a prime period two on a complete $b$-metric space $(X,d)$ with constant $s\geq 1$, then it possesses a fixed point in $X$.
\end{corollary}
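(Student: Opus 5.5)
The plan is to reduce the corollary to Theorem~\ref{thm:main_nonperiodic} combined with Theorem~\ref{thm:graphic_basic}, after first disposing of the degenerate case excluded by the hypothesis $|X|\geq 3$ of Theorem~\ref{thm:main_nonperiodic}.

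\textbf{Small spaces.} If $|X|=1$, the unique point is fixed. If $|X|=2$, say $X=\{a,b\}$, the CPTM condition \eqref{eq:mcpT} is vacuous, since there are no mutually distinct triples, so only the no-period-two hypothesis can be used. Among the four self-maps of $X$, the only one without a fixed point is the swap $a\mapsto b$, $b\mapsto a$. In that map $a$ is a periodic point of prime period two, which is excluded. Hence $f$ has a fixed point.

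\textbf{Main case $|X|\geq 3$.} By Theorem~\ref{thm:main_nonperiodic}, for every $n\geq n_0$ the iterate $f^n$ is a graphic contraction with coefficient $\lambda_n = sq^n/(1-sq^n)$. Theorem~\ref{thm:graphic_basic} requires the stronger condition $s\lambda_n<1$, not merely $\lambda_n<1$. This holds for large $n$ because $\lambda_n\to 0$ as $n\to\infty$; explicitly, $s\lambda_n<1$ is equivalent to $sq^n(s+1)<1$. I will therefore fix some $N\geq n_0$ with $q^N<1/(s(s+1))$.

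Next I need the graph of $f^N$ to be closed. Proposition~\ref{prop:continuity} gives continuity of $f$, so $f^N$ is continuous as a composition of continuous maps. Since a $b$-metric space is Hausdorff, the graph of $f^N$ is closed, exactly as in Step~3 of the proof of Theorem~\ref{thm:main_nonperiodic}. Theorem~\ref{thm:graphic_basic}, applied to $f^N$ in the complete space $(X,d)$, then yields a point $x^*$ with $f^N(x^*)=x^*$.

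\textbf{Conclusion.} The point $x^*$ is periodic for $f$, and its prime period $k$ divides $N$. Lemma~\ref{lem:no_period_gt2} rules out $k\geq 3$, and the hypothesis rules out $k=2$. Hence $k=1$, so $x^*\in\operatorname{Fix}(f)$.

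The main point requiring care is the parameter bookkeeping. One must not apply Theorem~\ref{thm:graphic_basic} to $f^{n_0}$ merely because $\lambda_{n_0}<1$; the index has to be chosen so that $s\lambda_N<1$. I will make that choice explicit as above. The remaining steps are direct citations of earlier results.
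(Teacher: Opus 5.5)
Your proposal is correct and follows the same route as the paper. Theorem~\ref{thm:main_nonperiodic} gives an iterate that is a graphic contraction, Theorem~\ref{thm:graphic_basic} gives a fixed point $x^*$ of that iterate, and Lemma~\ref{lem:no_period_gt2} together with the exclusion of 2-cycles forces the prime period of $x^*$ to be $1$. Your extra bookkeeping closes gaps that the paper's proof passes over: you treat $|X|\leq 2$ separately; you choose $N$ with $q^N<1/(s(s+1))$ so that $s\lambda_N<1$ (the paper applies Theorem~\ref{thm:graphic_basic} at $n_0$, where only $sq^{n_0}<1$ is guaranteed); and you check that the graph of $f^N$, not just that of $f$, is closed.
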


\begin{proof}
By Theorem \ref{thm:main_nonperiodic}, there exists $n_{0}\in \mathbb{N}$ such that $f^{n_{0}}$ is an orbitally continuous graphic contraction. Since $(X,d)$ is complete, Theorem \ref{thm:graphic_basic} implies that $f^{n_{0}}$ possesses a fixed point $x^{*}\in X$; that is, $f^{n_{0}}(x^{*}) = x^{*}$. Hence $x^{*}$ is a periodic point of $f$. Let $k$ denote the prime period of $x^*$. Since $f^{n_0}(x^*) = x^*$, the division algorithm implies that $n_0 = qk + r$ for $0 \le r < k$. Thus
\[
x^* = f^{n_0}(x^*) = f^r(f^{qk}(x^*)) = f^r(x^*),
\]
which by the minimality of the prime period gives $r = 0$, proving that $k \mid n_0$. By Lemma \ref{lem:no_period_gt2}, $k\leq 2$. As period two is excluded, $k=1$, so $f(x^{*})=x^{*}$.
\end{proof}

\begin{lemma}
\label{lem:atmost2fixed}
Let $(X,d)$ be a $b$-metric space and let $T:X\to X$ satisfy the contractive condition \eqref{eq:mcpT}. Then $T$ possesses at most two fixed points.
\end{lemma}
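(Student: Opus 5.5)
The argument is by contradiction: assume $T$ has at least three fixed points and feed them directly into the perimetric inequality. Suppose $x,y,z\in \operatorname{Fix}(T)$ are pairwise distinct, so that $|\{x,y,z\}| = 3$ and the distinctness hypothesis of Definition \ref{def:mcpT} holds. Since $T(x)=x$, $T(y)=y$, $T(z)=z$, condition \eqref{eq:mcpT} becomes
\[
P(x,y,z) = P(T(x),T(y),T(z)) \leq q\,P(x,y,z).
\]

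Next I use the positivity of the perimeter, exactly as in the proof of Lemma \ref{lem:no_period_gt2}. By \eqref{eq:bmetric_zero}, distinct points have strictly positive distance, so $P(x,y,z) \geq d(x,y) > 0$; this is also visible from \eqref{eq:perim_lb}. Dividing by $P(x,y,z)$ gives $1 \leq q$, which contradicts $q\in[0,1)$. Hence $\operatorname{Fix}(T)$ cannot contain three distinct points, i.e. $|\operatorname{Fix}(T)|\leq 2$.

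There is no real obstacle here. The $b$-triangle inequality and the constant $s$ never enter, so the argument needs neither completeness nor continuity. The only point needing care is to check that the three fixed points really are mutually distinct before invoking \eqref{eq:mcpT}, since the condition is only assumed for such triples; that holds by the contradiction hypothesis. Combined with Corollary \ref{cor:fixedpoint}, this will give the cardinality bound $1\leq|\operatorname{Fix}(f)|\leq 2$ announced in the introduction.
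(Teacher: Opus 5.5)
Your proof is correct and essentially identical to the paper's. Three distinct fixed points turn \eqref{eq:mcpT} into $P(x,y,z)\le q\,P(x,y,z)$, and since the perimeter of mutually distinct points is strictly positive, this forces $1\le q$, a contradiction.
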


\begin{proof}
Suppose that $T$ has three distinct fixed points $x,y,z\in X$. Since $Tx=x, Ty=y, Tz=z$, we obtain
\[
P(Tx,Ty,Tz) = P(x,y,z),
\]
where $P(x,y,z) = d(x,y) + d(y,z) + d(z,x)$. Because $x,y,z$ are mutually distinct and $d(u,v) = 0$ iff $u=v$, we have $P(x,y,z) > 0$. Applying the perimeter contraction condition to $x,y,z$ gives
\[
P(x,y,z) = P(Tx,Ty,Tz)\leq qP(x,y,z).
\]
Dividing by the positive quantity $P(x,y,z)$ yields $1\leq q$, contradicting $0\leq q<1$. Therefore, $T$ cannot have three distinct fixed points. Hence the number of fixed points of $T$ is at most two.
\end{proof}

\begin{remark}
\label{rem:optimal_fixed}
The above result is optimal, since there exist perimeter contractive mappings possessing exactly two distinct fixed points.
\end{remark}

\begin{remark}
\label{rem:fix_iter}
A careful examination of the proof of Corollary \ref{cor:fixedpoint} gives that
\[
\operatorname{Fix}(f^n) = \operatorname{Fix}(f)\qquad \text{for every } n\in \mathbb{N}.
\]
Indeed, if $x\in \operatorname{Fix}(f^n)$, then $x$ is a periodic point of $f$. Let $k$ denote its prime period. By Lemma \ref{lem:no_period_gt2}, the mapping $f$ admits no periodic points of prime period greater than two, while periodic points of prime period two are excluded by hypothesis. Consequently, $k=1$, and hence $f(x)=x$. Therefore, $\operatorname{Fix}(f^n)\subseteq \operatorname{Fix}(f)$. The reverse inclusion is immediate, since every fixed point of $f$ is clearly a fixed point of each iterate $f^n$. Thus, $\operatorname{Fix}(f^n) = \operatorname{Fix}(f)$ for all $n\in \mathbb{N}$. This property resembles the classical Banach contraction principle. However, unlike Banach contractions, mappings contracting perimeters of triangles may possess two distinct fixed points, and therefore uniqueness of the fixed point cannot be expected in general.
\end{remark}

As emphasized above, the absence of periodic points of prime period two is essential in Theorem \ref{thm:main_nonperiodic}. The following example shows that the perimeter contraction property alone does not prevent the existence of 2-cycles.

\begin{example}
\label{ex:2cycle}
(Adapted from \cite{cvetkovic2026} to $b$-metric spaces). Let
\[
X = \{a,b,c\}
\]
and equip $X$ with the discrete metric
\[
d(x,y) = 
\begin{cases}
0, & x=y,\\
1, & x\neq y.
\end{cases}
\]
Since every metric is a $b$-metric, $(X,d)$ is a complete $b$-metric space with parameter $s=1$. Define $f\colon X\to X$ by
\[
f(a) = b,\qquad f(b) = a,\qquad f(c) = a.
\]
Since $\{a,b,c\}$ is the only triple of mutually distinct points, we compute
\[
P(f(a),f(b),f(c)) = P(b,a,a) = d(b,a) + d(a,a) + d(a,b) = 2,
\]
whereas
\[
P(a,b,c) = d(a,b) + d(b,c) + d(c,a) = 3.
\]
Hence
\[
P(f(a),f(b),f(c)) = \frac{2}{3}P(a,b,c),
\]
showing that $f$ contracts perimeters of triangles with contractive constant $q = \frac{2}{3}$.

Furthermore,
\[
f^{2}(a) = a,\qquad f^{2}(b) = b,
\]
and therefore $a$ and $b$ form a periodic orbit of prime period two.

For every odd integer $n$,
\[
f^{n}(a) = b,\qquad f^{2n}(a) = a.
\]
Consequently,
\[
d(f^{n}(a),f^{2n}(a)) = d(b,a) = 1,
\]
while
\[
d(a,f^{n}(a)) = d(a,b) = 1.
\]
If $f^{n}$ were a graphical contraction, there would exist $\lambda \in [0,1)$ such that
\[
d(f^{n}(a),f^{2n}(a))\leq \lambda d(a,f^{n}(a)).
\]
Substituting the above values gives $1\leq \lambda$, which contradicts $\lambda<1$. Therefore, $f^{n}$ fails to be a graphical contraction for every odd integer $n$. This demonstrates that the conclusion of Theorem \ref{thm:main_nonperiodic} may fail in the presence of periodic points of prime period two.
\end{example}

\begin{remark}
The preceding example shows that the perimeter contraction property alone does not guarantee that sufficiently large iterates become graphical contractions. The sole obstruction is the existence of periodic points of prime period two. Consequently, the assumption excluding 2-cycles in Theorem \ref{thm:main_nonperiodic} is essential.
\end{remark}

Combining Corollary \ref{cor:fixedpoint} with the preceding discussion yields the following consequence.

\begin{corollary}
\label{cor:periodic_atmost2}
Let $f:X\to X$ be a  contracting perimeters of triangles mapping on a complete $b$-metric space $(X,d)$. Then $f$ possesses a periodic point whose prime period is at most two.
\end{corollary}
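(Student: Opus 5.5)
The plan is a case split on whether $f$ has a periodic point of prime period two. If it does, there is nothing to prove: such a point is itself a periodic point whose prime period is at most two. So all the work is in the complementary case, where $f$ has no periodic points of prime period two.

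In that case we are exactly under the hypotheses of Corollary \ref{cor:fixedpoint}. It yields a fixed point $x^*$ of $f$, that is, a periodic point of prime period one. One small point needs checking. The corollary rests on Theorem \ref{thm:main_nonperiodic}, which assumes $|X|\geq 3$, so the case $|X|\leq 2$ must be handled separately.

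If $|X|\le 2$, take any $x\in X$ and look at the orbit $x, f(x), f^2(x)$. These are three elements of a set with at most two points, so two of them coincide.
\begin{itemize}
\item If $f(x)=x$ or $f^2(x)=f(x)$, then $x$ or $f(x)$ is a fixed point.
\item If $f^2(x)=x$ with $f(x)\neq x$, then $x$ has prime period two.
\end{itemize}
In every subcase we obtain a periodic point of prime period at most two. Note that for $|X|\le 2$ the condition \eqref{eq:mcpT} is vacuous, so this step must be purely combinatorial. (More generally, whenever $X$ is finite, the orbit is eventually periodic, and Lemma \ref{lem:no_period_gt2} already bounds the period by two.)

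Combining the two cases gives the corollary. I expect no real obstacle. The only care needed is to make sure the hypotheses of Corollary \ref{cor:fixedpoint} (completeness, and implicitly $|X|\ge 3$ through Theorem \ref{thm:main_nonperiodic}) are met in the second case, and to treat the degenerate small-space case directly as above.
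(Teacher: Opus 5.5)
Your proof is correct and uses the same case split as the paper: a point of prime period two already works, and otherwise Corollary~\ref{cor:fixedpoint} supplies a fixed point. Your separate combinatorial treatment of $|X|\le 2$ is a worthwhile extra, because Corollary~\ref{cor:fixedpoint} is proved via Theorem~\ref{thm:main_nonperiodic}, which assumes $|X|\ge 3$, and the paper's proof does not address this.
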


\begin{proof}
If $f$ has no periodic point of prime period two, then Corollary \ref{cor:fixedpoint} implies that $f$ has a fixed point, which is a periodic point of prime period one. Otherwise, $f$ possesses a periodic point of prime period two. Hence, in either case, $f$ admits a periodic point whose prime period is at most two.
\end{proof}

\begin{remark}
A fixed point and a periodic point of prime period two cannot coexist for a  contracting perimeters of triangles mapping. Consequently, such mappings exhibit a dichotomy: either their periodic points are fixed points, or they belong to 2-cycles, but both phenomena cannot occur simultaneously.
\end{remark}

\begin{theorem}
\label{thm:no_fixed_with_2cycle}
Let $(X,d)$ be a $b$-metric space and $f:X\to X$  a contracting perimeters of triangles mapping; that is, there exists a constant $q\in [0,1)$ such that
\[
d(fx,fy) + d(fy,fz) + d(fz,fx)\leq q[d(x,y) + d(y,z) + d(z,x)]
\]
for all mutually distinct points $x,y,z\in X$. Then $f$ cannot possess simultaneously a fixed point and a periodic point of prime period two.
\end{theorem}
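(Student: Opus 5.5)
Suppose, toward a contradiction, that $f$ has a fixed point $u$ and a point $x$ of prime period two. Put $y = f(x)$, so that $f(y) = x$ and $x \neq y$. The plan is to apply the contractive condition to the triple $(u, x, y)$.

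\textbf{Step 1: the triple is admissible.} We first check that $u, x, y$ are mutually distinct. We already have $x \neq y$. If $u = x$, then $x$ would be fixed, and its prime period would be $1$, not $2$. The same argument rules out $u = y$, because $y = f(x)$ is also a point of prime period two. Hence $|\{u, x, y\}| = 3$, and \eqref{eq:mcpT} applies to this triple.

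\textbf{Step 2: compute the image perimeter.} Since $f(u) = u$, $f(x) = y$ and $f(y) = x$, we get
\[
P(f(u), f(x), f(y)) = P(u, y, x) = d(u,y) + d(y,x) + d(x,u).
\]
This is exactly $P(u, x, y)$, because $d$ is symmetric. So $f$ maps the triangle $\{u, x, y\}$ onto itself and merely permutes its vertices.

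\textbf{Step 3: conclude.} The contractive condition now gives
\[
P(u, x, y) \leq q\, P(u, x, y).
\]
The perimeter is strictly positive, since the three points are distinct and $d$ separates points (cf.\ Lemma \ref{lem:perimeter_relations}). Dividing by it yields $1 \leq q$, which contradicts $q \in [0, 1)$. This is the same mechanism as in Lemma \ref{lem:atmost2fixed} and Lemma \ref{lem:no_period_gt2}.

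The only point needing care is the distinctness check in Step 1, in particular ruling out $u = y$. This uses that every point of a 2-cycle has prime period two. The proof uses neither the constant $s$ nor completeness of the space.
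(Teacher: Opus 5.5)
Your proof is correct and takes essentially the same route as the paper: you apply the perimeter condition to the triple formed by the fixed point and the two points of the 2-cycle, note that $f$ only permutes these vertices so the perimeter is unchanged, and obtain $1\le q$. Your Step 1 also justifies mutual distinctness more carefully than the paper, which asserts it in one line; in particular, you explicitly rule out $u=f(x)$ by using that $f(x)$ also has prime period two.
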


\begin{proof}
Suppose on the contrary that $x^{*}\in X$ is a fixed point of $f$ and that $y\in X$ is a periodic point of prime period two. Thus,
\[
f(x^{*}) = x^{*},\qquad f(y)\neq y,\qquad f^{2}(y) = y.
\]
Since $y\neq f(y)$ and $x^{*}$ is fixed, the points $x^{*},y,f y$ are mutually distinct. Applying the perimeter contractive condition to these three points gives
\[
d(fx^{*},fy) + d(fy,f^{2}y) + d(f^{2}y,fx^{*})\leq q[d(x^{*},y) + d(y,fy) + d(fy,x^{*})].
\]
Using the identities $f x^{*} = x^{*}$ and $f^{2}y = y$, we obtain
\[
d(x^{*},fy) + d(fy,y) + d(y,x^{*})\leq q[d(x^{*},y) + d(y,fy) + d(fy,x^{*})].
\]
By the symmetry of the $b$-metric,
\[
d(y,x^{*}) = d(x^{*},y)\qquad \text{and}\qquad d(fy,y) = d(y,fy),
\]
and therefore
\[
P(x^{*},y,f y)\leq q P(x^{*},y,f y),
\]
where
\[
P(x^{*},y,f y) = d(x^{*},y) + d(y,f y) + d(fy,x^{*}).
\]
Since the points $x^{*},y,$ and $f y$ are mutually distinct, we have $P(x^{*},y,f y) > 0$. Dividing by this positive quantity yields $1\leq q$, which contradicts the assumption $0\leq q<1$. Hence, $f$ cannot possess both a fixed point and a periodic point of prime period two.
\end{proof}

\begin{corollary}
\label{cor:dichotomy}
If every periodic point of $f$ has prime period at most two, then either all periodic points of $f$ are fixed points or all periodic points belong to 2-cycles.
\end{corollary}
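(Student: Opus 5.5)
The corollary follows directly from Theorem \ref{thm:no_fixed_with_2cycle}, so the plan is a short case distinction on the set of periodic points of $f$. Here $f$ is understood, as throughout this section, to be a contracting perimeters of triangles mapping on a $b$-metric space $(X,d)$. The hypothesis that every periodic point has prime period at most two is automatic by Lemma \ref{lem:no_period_gt2}, but we simply take it as given. Consequently, the set $\mathrm{Per}(f)$ of periodic points splits as the disjoint union of two sets: $\operatorname{Fix}(f)$, the points of prime period one, and $\mathrm{Per}_2(f)$, the points of prime period exactly two.

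First, we check that $\mathrm{Per}_2(f)$ is a union of 2-cycles. If $y$ has prime period two, then so does $f(y)$, since $f^2(f(y)) = f(f^2(y)) = f(y)$ and $f(f(y)) = y \neq f(y)$. Hence $\{y, f(y)\}$ is a 2-cycle contained in $\mathrm{Per}_2(f)$, so ``belonging to a 2-cycle'' is the same as ``having prime period two''.

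Second, we apply Theorem \ref{thm:no_fixed_with_2cycle}: $\operatorname{Fix}(f)$ and $\mathrm{Per}_2(f)$ cannot both be nonempty. If $\mathrm{Per}_2(f)=\emptyset$, then $\mathrm{Per}(f) = \operatorname{Fix}(f)$, so all periodic points are fixed. If $\mathrm{Per}_2(f)\neq\emptyset$, then $\operatorname{Fix}(f)=\emptyset$, so every periodic point lies in a 2-cycle. This gives the dichotomy. If $\mathrm{Per}(f)$ is empty, both alternatives hold vacuously; in the complete setting Corollary \ref{cor:periodic_atmost2} excludes this case anyway.

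There is no real obstacle here. The only point needing care is the implicit assumption in the statement that $f$ is a CPTM, since Theorem \ref{thm:no_fixed_with_2cycle} requires the perimetric condition. The argument needs no completeness of $X$ and no restriction on $s$.
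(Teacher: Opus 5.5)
Your proof is correct and matches the paper. The paper gives no separate proof of this corollary; it deduces the dichotomy directly from Theorem~\ref{thm:no_fixed_with_2cycle} (a fixed point and a 2-cycle cannot coexist), using exactly the case split you describe. Your check that each point of prime period two lies in the 2-cycle $\{y,f(y)\}$, and your note that $f$ must implicitly be a CPTM, are useful clarifications the paper leaves unstated.
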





\begin{theorem}
\label{thm:even_iterates}
Let $(X,d)$ be a complete $b$-metric space with coefficient $s\geq 1$ and  $f:X\to X$  a contracting perimeters of triangles mapping with contraction constant $q\in [0,1)$. Suppose that $f$ possesses a periodic point of prime period two, and  $sq^{2}< 1$. Then there exists $n_0\in \mathbb{N}$ such that $f^{2n}$ is a continuous graphic contraction on $(X,d)$ for every $n\geq n_0$, with coefficient
\[
\lambda_n = \frac{s q^{2n}}{1 - s q^{2n}} < 1.
\]
\end{theorem}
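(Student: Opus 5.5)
I would follow the template of Step~2 in the proof of Theorem~\ref{thm:main_nonperiodic}, with $n$ replaced by $2n$ and the fixed-point case replaced by the 2-cycle case.

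\textbf{Choice of $n_0$.} Since $sq^{2}<1$ and $q<1$, the sequence $sq^{2n}$ decreases to $0$. I would choose $n_0$ so that $sq^{2n}<\tfrac12$ for all $n\ge n_0$; this guarantees $\lambda_n=\frac{sq^{2n}}{1-sq^{2n}}<1$. The hypothesis $sq^2<1$ already gives $sq^{2n}<1$ for every $n\ge1$. The extra condition $\lambda_n<1$ is exactly what forces us to pass to large $n$. Continuity of $f^{2n}$ is immediate from Proposition~\ref{prop:continuity}, since compositions of continuous maps are continuous.

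\textbf{Degenerate triples.} Fix $n\ge n_0$ and $x\in X$, and consider the triple $(x,f^{2n}x,f^{4n}x)$.
\begin{itemize}
\item If $x=f^{2n}x$ or $x=f^{4n}x$, then $x$ is periodic for $f$. By Lemma~\ref{lem:no_period_gt2} its prime period is $1$ or $2$. Either way $f^{2}x=x$, so $f^{2n}x=f^{4n}x=x$ and the left side $d(f^{2n}x,f^{4n}x)$ vanishes.
\item If $f^{2n}x=f^{4n}x$, the left side vanishes trivially.
\end{itemize}
Theorem~\ref{thm:no_fixed_with_2cycle} rules out fixed points, but that is not even needed here.

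\textbf{Non-degenerate triples.} If $x,f^{2n}x,f^{4n}x$ are mutually distinct, I would apply \eqref{eq:mcpT} $2n$ times to get
\[
P(f^{2n}x,f^{4n}x,f^{6n}x)\le q^{2n}P(x,f^{2n}x,f^{4n}x).
\]
This needs every intermediate triple $(f^{j}x,f^{2n+j}x,f^{4n+j}x)$ with $0\le j<2n$ to be mutually distinct. This is the main obstacle.

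If some intermediate triple degenerates, say $f^{j}x=f^{2n+j}x$, then $f^{j}x$ is periodic of period $1$ or $2$. Hence $f^{j+2}x=f^jx$, and everything from index $j$ onward lies on a point or a 2-cycle. The key parity observation is that $2n$ and $4n$ are even. Therefore $f^{2n+j}x=f^{4n+j}x$, and likewise for all later iterates, so $f^{2n}x=f^{4n}x$ as soon as $j\le 2n$. That contradicts the non-degeneracy assumption. This is precisely where the even iterate is essential, as Example~\ref{ex:2cycle} shows for odd iterates. The same argument handles the other coincidence patterns, giving the contraction chain.

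\textbf{Rearrangement.} Finally I would bound the left perimeter from below and the right perimeter from above:
\begin{itemize}
\item On the left, drop terms: $P(f^{2n}x,f^{4n}x,f^{6n}x)\ge d(f^{2n}x,f^{4n}x)$.
\item On the right, use \eqref{eq:perim_ub}: $P(x,f^{2n}x,f^{4n}x)\le (s+1)\big[d(x,f^{2n}x)+d(f^{2n}x,f^{4n}x)\big]$.
\end{itemize}
This gives a weaker constant $(s+1)q^{2n}$ than the stated one. To match the paper's constant I would mimic its bound: estimate $d(f^{2n}x,f^{4n}x)$ by $s$ times the other two sides, which yields the lower bound $\frac{s+1}{s}\,d(f^{2n}x,f^{4n}x)$. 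That produces
\[
d(f^{2n}x,f^{4n}x)\le sq^{2n}\big[d(x,f^{2n}x)+d(f^{2n}x,f^{4n}x)\big].
\]
Solving this inequality gives $d(f^{2n}x,f^{4n}x)\le\lambda_n\, d(x,f^{2n}x)$, so $f^{2n}$ is a graphic contraction with coefficient $\lambda_n$.

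I expect the justification of that lower bound to be delicate, and I would adjust $n_0$ to absorb whatever constant actually comes out.
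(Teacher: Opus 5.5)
\textbf{Genuine gap: the chain for $2n$ iterates.} Your plan is the paper's own. Its Case~2 imports the chain $P(f^{m}x,f^{2m}x,f^{3m}x)\le q^{m}P(x,f^{m}x,f^{2m}x)$ from the proof of Theorem~\ref{thm:main_nonperiodic} and then uses the same two perimeter bounds. You are right that this chain needs every intermediate triple $(f^jx,f^{2n+j}x,f^{4n+j}x)$, $0\le j<2n$, to be mutually distinct, and the paper never checks this.

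Your parity argument only handles two of the three coincidence patterns:
\begin{itemize}
\item $f^jx=f^{2n+j}x$ gives $f^{2n}x=f^{4n}x$ after applying $f^{2n-j}$.
\item $f^jx=f^{4n+j}x$ reduces to the first pattern.
\item $f^{2n+j}x=f^{4n+j}x$ with $j\ge1$ only says the orbit has reached the $2$-cycle by time $2n+j$. Since $f^{2n}x$ may still be off the cycle, nothing forces $f^{2n}x=f^{4n}x$, so ``the same argument'' fails here.
\end{itemize}

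This third pattern really occurs. On $X=\{a,b\}\cup\{c_k:k\ge1\}$ put $d(u,v)=3^{\max\{\ell(u),\ell(v)\}}$ for $u\neq v$, with $\ell(a)=\ell(b)=0$ and $\ell(c_k)=k$. Let $f(a)=b$, $f(b)=a$, $f(c_1)=a$, and $f(c_k)=c_{k-1}$ for $k\ge2$. This is a complete ultrametric, and $f$ is a CPTM with $q=7/19$. Indeed, a distinct triple with two largest levels $\ell_2<\ell_3$ has perimeter $3^{\ell_2}+2\cdot 3^{\ell_3}$, $f$ lowers positive levels by one, and the worst triple is $\{a,b,c_2\}$.

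For every $n\ge1$ take $x=c_{2n+1}$. The triple $(x,f^{2n}x,f^{4n}x)=(c_{2n+1},c_1,b)$ is non-degenerate, but the $j=1$ triple is $(c_{2n},a,a)$, so the chain breaks after one step. The stated inequality still holds there (the ratio is $3^{-2n}$), but your derivation does not reach it.

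In general, let $T$ be the first time the orbit of $x$ hits $\{a,b\}$. The chain is unproved exactly when $2n<T<4n$:
\begin{itemize}
\item if $T\le 2n$, then $f^{2n}x=f^{4n}x$;
\item if $T\ge 4n$, all intermediate triples are distinct, because every periodic point lies in $\{a,b\}$ (Corollary~\ref{cor:exactly_two_periodic}).
\end{itemize}

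\textbf{How to close it.} Handle $2n<T\le 4n$ with the cycle itself.
\begin{itemize}
\item For $u\notin\{a,b\}$ the triple $(u,a,b)$ is distinct and maps to $(fu,b,a)$. So $\psi(u):=P(u,a,b)$ satisfies $\psi(fu)\le q\psi(u)$, and hence $\psi(f^{2n}x)\le q^{2n}\psi(x)$.
\item Since $f^{4n}x\in\{a,b\}$, the $b$-triangle inequality gives $d(f^{2n}x,f^{4n}x)\le\frac{s}{s+1}\psi(f^{2n}x)$.
\item Routing $d(x,a)$ and $d(x,b)$ through $f^{2n}x$ gives $\psi(x)\le 2s\,d(x,f^{2n}x)+s\,\psi(f^{2n}x)$.
\end{itemize}
Combining these,
\[ d(f^{2n}x,f^{4n}x)\le \frac{2s}{s+1}\,\lambda_n\,d(x,f^{2n}x). \]
For $s=1$ this is exactly $\lambda_n$. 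For $s>1$ it still makes $f^{2n}$ a graphic contraction for all large $n$, but with a larger constant than stated. Recovering $\lambda_n$ itself would need a sharper estimate in this case.

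\textbf{Other points.} Your degenerate-case argument (prime period $1$ or $2$, hence $f^2x=x$) is correct; the paper's treatment of $x=f^{2m}x$ is circular. Your requirement $sq^{2n}<\tfrac12$ for $\lambda_n<1$ is also correct; the paper wrongly asserts that $sq^{2n}<1$ suffices. The lower bound $P\ge\frac{s+1}{s}\,d(f^{2n}x,f^{4n}x)$ you flag as delicate is a single application of the $b$-triangle inequality and holds for any three points.
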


\begin{proof}
Since $f$ possesses a periodic point of prime period two, it follows directly from Theorem~\ref{thm:no_fixed_with_2cycle} that $f$ has no fixed points on $X$.

Fix an even integer iteration index $m=2n$. We claim that the iterate $f^m$ constitutes a graphic contraction whenever $s q^m < 1$. We analyze the behavior of the orbital triple $(x, f^m x, f^{2m} x)$ by distinguishing between its geometric configurations. We now consider the following two cases

\medskip\noindent
\textbf{Case 1}: Suppose that the triple is not mutually distinct (Degenerate Configurations).
\begin{itemize}
    \item If $x = f^m x$, then $x$ is a periodic point whose period divides the even integer $m$. In this configuration, it follows immediately that $f^{2m}x = f^m(f^mx) = f^mx = x$. Thus, the operational step yields a trivial distance: $d(f^m x, f^{2m} x) = d(x, x) = 0$.
    \item If $x = f^{2m} x$, the first and third elements of the triple coincide. To evaluate the graphic contraction condition, we examine the target relation directly. Substituting $f^{2m}x = x$ yields the inequality $d(f^m x, x) \le \lambda_n d(x, f^m x)$. Because the coefficient satisfies $\lambda_n < 1$, this relation holds if and only if $d(x, f^m x) = 0$, which simplifies to $x = f^m x$, reducing back to a zero-distance state.
    \item If $f^m x = f^{2m} x$, the distance expression vanishes identically: $d(f^m x, f^{2m} x) = 0$.
\end{itemize}
Hence, all degenerate configurations trivially satisfy the graphic contraction condition.

\medskip\noindent
\textbf{Case 2}: Suppose that the triple is mutually distinct
When the elements $(x, f^m x, f^{2m} x)$ form a non-degenerate triangle, we apply the exact perimetric bounding and inductive infinite-series enveloping sequence detailed in the proof of Theorem~\ref{thm:main_nonperiodic}. Replacing the standard iteration index with our fixed even parameter $m$ yields
\[
d(f^m x, f^{2m} x) \le \frac{s q^m}{1 - s q^m} d(x, f^m x).
\]
Setting $m=2n$ and selecting an index $n \ge n_0$ sufficiently large such that $s q^{2n}<1$ guarantees that $\lambda_n < 1$, completing the verification of the graphic contraction property.

\medskip\noindent
The global continuity of the iterate $f^{2n}$ is guaranteed by Proposition~\ref{prop:continuity}. Since the underlying mapping $f$ is continuous on $X$, any finite composition of $f$ inherits this topological regularity. 
\end{proof}

\begin{example}
\label{ex:even_iterates}
(Adapted from \cite{cvetkovic2026} to $b$-metric spaces). Let
\(X = \{a,b,c\}\)
be equipped with the discrete metric
\[
d(x,y) = 
\begin{cases}
0, & x=y,\\
1, & x\neq y.
\end{cases}
\]
Then $(X,d)$ is a complete metric space and hence a complete $b$-metric space with coefficient $s=1$. Define $f:X\to X$ by
\[
f(a) = b,\qquad f(b) = a,\qquad f(c) = a.
\]
The points $a$ and $b$ form a periodic orbit of prime period two:
\[
f(a) = b,\qquad f(b) = a.
\]

\noindent\textbf{Verification of the perimeter contraction property.}
Since $X$ contains exactly three distinct points, it suffices to consider the triple $(a,b,c)$. Its perimeter equals
\[
P(a,b,c) = d(a,b) + d(b,c) + d(c,a) = 1 + 1 + 1 = 3.
\]
Moreover,
\[
(fa,fb,fc) = (b,a,a),
\]
and therefore
\[
P(fa,fb,fc) = d(b,a) + d(a,a) + d(a,b) = 1 + 0 + 1 = 2.
\]
Hence,
\[
P(fa,fb,fc) = \frac{2}{3} P(a,b,c).
\]
Consequently, $f$ contracts perimeters of triangles with contractive constant $q = \frac{2}{3} < 1$.

\noindent\textbf{Odd iterates are not graphic contractions.}
Let $m = 2n + 1$ be odd. Since
\[
f^{m}(a) = b,\qquad f^{2m}(a) = a,
\]
we obtain
\[
d(f^{m}a,f^{2m}a) = d(b,a) = 1,
\]
while
\[
d(a,f^{m}a) = d(a,b) = 1.
\]
If $f^{m}$ were a graphic contraction, there would exist $\lambda_{m}<1$ such that
\[
d(f^{m}a,f^{2m}a)\leq \lambda_{m}d(a,f^{m}a),
\]
which would imply $1\leq \lambda_{m}$, a contradiction. Therefore no odd iterate of $f$ is a graphic contraction.

\noindent\textbf{Even iterates are graphic contractions.}
Let $m = 2n$ be even. Then
\[
f^{m}(a) = a,\qquad f^{m}(b) = b.
\]
Moreover,
\[
f(c) = a,\qquad f^{2}(c) = b,\qquad f^{3}(c) = a,\qquad f^{4}(c) = b,
\]
and therefore $f^{2n}(c) = b$ for every $n\geq 1$. Consequently,
\[
f^{4n}(a) = a,\qquad f^{4n}(b) = b,\qquad f^{4n}(c) = b,
\]
and hence
\[
d(f^{2n}x,f^{4n}x) = 0
\]
for every $x\in X$. Therefore,
\[
d(f^{2n}x,f^{4n}x) = 0\leq 0\cdot d(x,f^{2n}x),\qquad x\in X.
\]
Thus every even iterate $f^{2n}$ is a graphic contraction with contractive constant $\lambda_{n} = 0$. Hence the conclusion of the even-iterate theorem holds with $n_0 = 1$.
\end{example}

\begin{corollary}
\label{cor:exactly_two_periodic}
Let $(X,d)$ be a complete $b$-metric space with coefficient $s\geq 1$, and $f:X\to X$ be a contracting perimeters of triangles mapping. If $f$ possesses a periodic point of prime period two, then $f$ has exactly two periodic points, both of which have prime period two.
\end{corollary}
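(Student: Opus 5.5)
Let $y\in X$ be a periodic point of prime period two and put $z=f(y)$, so that $y\neq z$, $f(y)=z$ and $f(z)=y$. By Lemma~\ref{lem:no_period_gt2}, every periodic point of $f$ has prime period one or two. By Theorem~\ref{thm:no_fixed_with_2cycle}, $f$ has no fixed point, since a fixed point cannot coexist with the $2$-cycle $\{y,z\}$. Hence every periodic point of $f$ has prime period exactly two. It therefore suffices to show that every periodic point $w$ of $f$ lies in $\{y,z\}$. This gives exactly two periodic points, namely $y$ and $z$, both of prime period two.

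Suppose, to the contrary, that $w$ is a periodic point with $w\notin\{y,z\}$. Then $w$ has prime period two, so $u:=f(w)$ satisfies $u\neq w$ and $f(u)=w$. We also have $u\notin\{y,z\}$: if $u=y$, then $w=f(u)=f(y)=z$, and if $u=z$, then $w=y$, both excluded. So $\{w,u\}$ is a second $2$-cycle, disjoint from $\{y,z\}$. Now apply \eqref{eq:mcpT} to the mutually distinct triple $(y,z,w)$. Its image is $(z,y,u)$, which is again mutually distinct. This gives
\[
d(z,y)+d(y,u)+d(u,z)\le q\,[d(y,z)+d(z,w)+d(w,y)].
\]
Next apply \eqref{eq:mcpT} to the distinct triple $(z,y,u)$, whose image is $(y,z,w)$. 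This gives the reverse inequality with the same factor $q$. Write $A=P(y,z,w)$ and $B=P(z,y,u)$; both are positive because each triple is mutually distinct. The two inequalities read $B\le qA$ and $A\le qB$, so $A\le q^2A$. Dividing by $A>0$ gives $1\le q^2$, which contradicts $q<1$. Hence no such $w$ exists.

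I do not expect a serious obstacle; the main care is in the distinctness bookkeeping. The contraction \eqref{eq:mcpT} may only be applied to triples of mutually distinct points, so one must check that $w\notin\{y,z\}$ forces $f(w)\notin\{y,z\}$, as done above. As in Lemma~\ref{lem:no_period_gt2}, the coefficient $s$ plays no role, and completeness is not actually needed. If one wanted an argument closer to the paper's machinery, an alternative would be to invoke Theorem~\ref{thm:even_iterates} together with Lemma~\ref{lem:atmost2fixed} applied to $T=f^{2}$. I would avoid this route, because it is unclear whether $f^2$ is itself a CPTM and because it would require the extra hypothesis $sq^2<1$. The direct two-cycle comparison above is cleaner and proves the statement exactly as written.
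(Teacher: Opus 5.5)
Your proof is correct and follows essentially the same argument as the paper. Both rule out fixed points and prime periods greater than two, check that the image of a hypothetical extra periodic point avoids the known $2$-cycle, and then apply the perimeter contraction twice to the triple formed by the $2$-cycle and that extra point, using $f^2$ to return to the original triple and obtain $1\le q^2$. Your remark that neither completeness nor $s$ is actually used is also accurate.
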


\begin{proof}
Assume that $x^{*}\in X$ is a periodic point of prime period two. Then by definition,
\[
f^{2}(x^{*}) = x^{*},\qquad f(x^{*})\neq x^{*}.
\]
Consequently, the points $x^{*}$ and $f(x^{*})$ are distinct and form a periodic orbit of length two (a 2-cycle). Since
\[
f^{2}(f(x^{*})) = f(f^{2}(x^{*})) = f(x^{*}),
\]
and $f(x^{*})\neq f^{2}(x^{*})$, the point $f(x^{*})$ is also a periodic point of prime period two.

By Lemma \ref{lem:no_period_gt2}, $f$ cannot possess periodic points of a prime period strictly greater than two. Furthermore, Theorem \ref{thm:no_fixed_with_2cycle} establishes that a fixed point (a periodic point of prime period one) and a 2-cycle cannot coexist under an CPTM. Therefore, $f$ possesses no fixed points, meaning all potential periodic points must have a prime period of exactly two.

Assume on the contrary that there exists another periodic point $y\in X$ of prime period two such that
\[
y\notin \{x^{*},f(x^{*})\}.
\]
This assumption implies that the three points $x^{*},f(x^{*}),$ and $y$ are mutually distinct.

Since $y$ has prime period two, we have $f^{2}(y) = y$ and $f(y)\neq y$. We claim that $f(y)$ is also distinct from both $x^{*}$ and $f(x^{*})$. Indeed, if $f(y) = x^{*}$, applying $f$ to both sides yields $y = f^{2}(y) = f(x^{*})$, which contradicts $y\neq f(x^{*})$. Similarly, if $f(y) = f(x^{*})$, applying $f$ yields $y = f^{2}(y) = f^{2}(x^{*}) = x^{*}$, which contradicts $y\neq x^{*}$. Hence, the points
\[
f(x^{*}),\quad f(y),\quad x^{*}
\]
are also mutually distinct.

We can now properly apply the perimeter contraction condition to the triple of mutually distinct points $(x^{*},y,f(x^{*}))$, which yields:
\begin{equation}
P\big(f(x^{*}),f(y),f^{2}(x^{*})\big)\leq q P\big(x^{*},y,f(x^{*})\big).
\label{eq:cor_35}
\end{equation}
Using the identity $f^{2}(x^{*}) = x^{*}$, inequality \eqref{eq:cor_35} becomes:
\begin{equation}
P\big(f(x^{*}),f(y),x^{*}\big)\leq q P\big(x^{*},y,f(x^{*})\big).
\label{eq:cor_36}
\end{equation}
Analogously, because the points $f(x^{*})$, $f(y)$, and $x^{*}$ are mutually distinct, we can apply the perimeter contraction condition a second time, now to the triple $(f(x^{*}),f(y),x^{*})$:
\begin{equation}
P\big(f^{2}(x^{*}),f^{2}(y),f(x^{*})\big)\leq q P\big(f(x^{*}),f(y),x^{*}\big).
\label{eq:cor_37}
\end{equation}
Using the periodic identities $f^{2}(x^{*}) = x^{*}$ and $f^{2}(y) = y$, inequality \eqref{eq:cor_37} reduces to:
\begin{equation}
P\big(x^{*},y,f(x^{*})\big)\leq q P\big(f(x^{*}),f(y),x^{*}\big).
\label{eq:cor_38}
\end{equation}
Combining inequalities \eqref{eq:cor_36} and \eqref{eq:cor_38} yields:
\[
P\big(x^{*},y,f(x^{*})\big)\leq q\cdot P\big(f(x^{*}),f(y),x^{*}\big)\leq q^{2}P\big(x^{*},y,f(x^{*})\big).
\]
Because the points $x^{*},f(x^{*})$, and $y$ are mutually distinct, the axioms of the $b$-metric space guarantee that the distances between them are strictly positive, which implies that their perimeter expression is strictly positive:
\[
P\big(x^{*},y,f(x^{*})\big) = d(x^{*},y) + d(y,f(x^{*})) + d(f(x^{*}),x^{*}) > 0.
\]
Dividing by this positive quantity yields $1 \leq q^{2}$, which  contradicts the contractive assumption that $0\leq q<1$. Consequently, no periodic point of prime period two can exist outside of the set $\{x^{*},f(x^{*})\}$. It follows that $f$ possesses exactly two periodic points, both of which belong to the unique 2-cycle orbit.
\end{proof}

\section{Conclusion}
The main motivation for this work was the recent paper of Cvetković \cite{cvetkovic2026}, where it was shown that sufficiently large iterates of mappings contracting perimeters of triangles become graphic contractions in complete metric spaces.

In this paper, we have extended these results to the broader framework of complete $b$-metric spaces. The presence of the coefficient $s$ in the $b$-triangle inequality gives rise to additional technical difficulties that are absent in ordinary metric spaces requiring new estimates for the iterates of the mapping.

We proved that contracting perimeters of triangles mapping on complete $b$-metric spaces cannot possess periodic points of prime period greater than two. In the absence of periodic points of prime period two, sufficiently large iterates become graphic contractions, thereby extending the principal theorem of Cvetković to $b$-metric spaces.

Furthermore, we established that fixed points and periodic points of prime period two cannot coexist. When a two-cycle exists, we showed that sufficiently large even iterates are graphic contractions, providing an extension of the metric-space theory to the periodic setting.

Thus, our work generalizes several results of Cvetković from metric spaces to complete $b$-metric spaces and demonstrates that the relationship between perimeter contractions and graphic contractions remains valid in this more general setting.

Future research directions include the investigation of  contracting perimeters of polygons mappings in generalized metric spaces, multivalued versions of perimeter contractions, and extensions to partial metric spaces, rectangular $b$-metric spaces, modular spaces, and convex $b$-metric spaces.

\end{document}